\newcommand\Weps{\gge_\eps}
\newcommand\Heps{\diss_\eps}
\newcommand\loc{_{\rm{loc}}}
\newcommand\FF{E_\eps}
\newcommand\ope{{\mathcal A}}
\newcommand\gge{{\mathcal W}}
\newcommand\diss{{\mathcal H}}
\newcommand\ksue{\frac \kappa\eps}
\newcommand\emte{e^{-t/\eps}}
\newcommand\Rplus{\R^{\scriptscriptstyle +}}
\newcommand\spt{\Rplus \!\times\!\R^n}
\newcommand\spint\int
\newcommand\INTTO[1]{\int_0^{#1}\!\!\spint}
\newcommand\IT\INTTO
\newcommand\emt{e^{-t}}
\newcommand\ems{e^{-s}}
\newcommand\emgs{e^{-\psi(s)}}
\newcommand\de{\,dxdt}
\newcommand\eps\varepsilon
\newcommand\dedeltaz[1]{\frac{\partial}{\partial\delta}#1 \big\vert_{\delta=0}}
\newcommand\weps{w_\eps}
\newcommand\embed{\!\!\hookrightarrow\!\!}
\newcommand\jeps{J_\eps}
\newcommand\feps{F_\eps}
\newcommand\R{\mathbb R}
\newcommand\Rpiu{\mathbb R^+}
\newtheorem{theorem}{Theorem}[section]
\newtheorem{proposition}[theorem]{Proposition}
\newtheorem{lemma}[theorem]{Lemma}
\newtheorem{corollary}[theorem]{Corollary}
\newtheorem{problem}{Problem}
\theoremstyle{remark}
\newtheorem{remark}[theorem]{Remark}
\theoremstyle{definition}
\newtheorem{definition}[theorem]{Definition}
\newenvironment{remnot}{\par\noindent{\bf Remark on notation.}}
{\par\medskip}
\def\namedlabel#1#2{\begingroup
\def\@currentlabel{#2}%
\label{#1}\endgroup
}
\date{}
\begin{document}

\title{{\bf A minimization approach \\ to hyperbolic Cauchy problems}}

\vskip 3 cm

\author{\\ \bf Enrico Serra, Paolo Tilli \\ \\
\small Dipartimento di Scienze Matematiche, Politecnico di Torino\\
\small Corso Duca degli Abruzzi, 24, 10129 Torino, Italy \\
\tt{\small enrico.serra@polito.it, paolo.tilli@polito.it}}

\maketitle

\begin{abstract} Developing an original idea of De~Giorgi,
we introduce a new and purely variational approach to the
Cauchy Problem for a wide class of defocusing hyperbolic
equations. The main novel feature is that the solutions are
obtained as limits of functions that minimize
suitable functionals  in space--time (where the
initial data of the Cauchy Problem serve as prescribed
\emph{boundary} conditions). This opens up the way to
new connections  between
the hyperbolic world and that of the Calculus of Variations.
Also dissipative equations can be treated.
Finally, we discuss several examples of equations that fit in this
framework, including nonlocal equations, in particular equations with
the fractional Laplacian.
\end{abstract}
\medskip

\noindent{\bf Mathematics subject classification:} 35L70, 35L90, 35L15, 49J45.

\noindent{\bf Keywords:} nonlinear hyperbolic equations, mimimization, a priori estimates.

\vskip .5cm

\section{Introduction}

In this paper we introduce a new and purely variational approach
to the Cauchy Problem for a wide class of defocusing hyperbolic
PDEs having the formal structure
\begin{equation}
\label{eq::1}
w''(t,x)=-\nabla \gge\big(w(t,\cdot)\bigr)(x),\quad
(t,x)\in \R^+\times \R^n,
\end{equation}
with prescribed initial conditions
\begin{equation}
  \label{eq:condin2}
  w(0,x)=w_0(x),\quad
w'(0,x)=w_1(x).
\end{equation}
While a precise setting with all formal details and our main results
are given in Section~2, here we confine ourselves to a rather
informal description of our approach, focusing on
the main ideas that lie behind it and on the possible new
perspectives that it opens up, especially some new connections
between the variational world and hyperbolic PDEs of the kind \eqref{eq::1}.

In \eqref{eq::1}, $\nabla\gge$ is the G\^ateaux derivative of
a functional (e.g. one from the Cal\-cu\-lus of Variations) $\gge:W\to [0,\infty)$, where
$W$ is some Banach space of functions in $\R^n$, typically a Sobolev space.
If, for instance,  $\gge(u)=1/2\int |\nabla u|^2\,dx$ is the Dirichlet
integral and $W=H^1(\R^n)$ then, formally, $-\nabla \gge(u)=\Delta u$,
and \eqref{eq::1} reduces to the wave equation $w''=\Delta w$, much
in the same spirit as the heat equation $u'=\Delta u$ is the
\emph{gradient flow} of the Dirichlet integral. Thus, in a sense,
\eqref{eq::1} can be considered as a ``second order gradient flow'' for
the functional $\gge$.

Our aim is to initiate and try to develop a rather general program,
suggested by De~Giorgi in \cite{DG} (see also \cite{DGopere}), that
offers a new, purely variational approach to equations of the kind \eqref{eq::1},
possibly with the addition of a dissipative term (see below).
We alert the reader that in this paper the term ``variational'' refers, in the spirit of De~Giorgi, to
\emph{minimization}, rather than Critical Point Theory.

The main idea, the abstract counterpart to a specific conjecture
stated in  \cite{DG} and discussed in \cite{Noi},
is to associate
with the abstract evolution equation \eqref{eq::1}
the functional
\begin{equation}
  \label{eq:deffeps}
  \feps(w)=\frac{\eps^2}{2}\int_0^\infty\!\!\!\int_{\R^n} \emte
 |w''(t,x)|^2 \,dxdt
+
\int_0^\infty\emte
\gge(w(t,\cdot))\,dt.
\end{equation}
This functional is to be minimized, for fixed $\eps>0$,
among all functions $w(t,x)$ in spacetime $\R^+\times\R^n$
subject to the constraints \eqref{eq:condin2}, which now
play the role of \emph{boundary} conditions.
Assuming the existence of an absolute minimizer $w_\eps$, the
Euler--Lagrange equation of \eqref{eq:deffeps} formally reads
\[
\eps^2 \bigl(e^{-t/\eps} w''_\eps\bigr)''+ e^{-t/\eps}\nabla
\gge\bigl(w_\eps(t,\cdot)\bigr)(x)=0,
\]
that is, the fourth--order in time equation
\begin{equation}
\label{4ord}
\eps^2 w''''_\eps-2\eps w'''_\eps+w''_\eps
+
\nabla
\gge\bigl(w_\eps(t,\cdot)\bigr)(x)=0.
\end{equation}
The connection with \eqref{eq::1} is clear:
letting $\eps\downarrow 0$, one formally obtains \eqref{eq::1} in the limit. This motivates
the following
\begin{problem}[De Giorgi, \cite{DG,DGopere}]\label{prob1}
Let $w_\eps$ be a minimizer of $\feps$ in \eqref{eq:deffeps},
subject to the boundary conditions \eqref{eq:condin2}. Investigate
the existence of a limit
 function
 \begin{equation}
 \label{wlimite}
 w(t,x)=\lim_{\eps\to 0^+} w_\eps(t,x),
 \end{equation}
and see if it solves the Cauchy Problem \eqref{eq::1}$\&$\eqref{eq:condin2}.
\end{problem}
In its generality, as long as the structure of the
functional $\gge$ is unknown, this may sound a little vague.
In fact, in \cite{DG} De~Giorgi raised this general question taking cue
from a precise conjecture in a particular case, namely when
\[
\gge(w)=\frac 1 2\int_{\R^n} |\nabla w(x)|^2\,dx
+
\frac 1 p\int_{\R^n} |w(x)|^p\,dx\quad
(p\geq 2)
\]
and \eqref{eq::1} becomes the nonlinear wave equation
\[
w''=\Delta w-w |w|^{p-2} \quad
(p\geq 2).
\]
In this particular case, Problem~\ref{prob1} has an affirmative answer, \cite{Noi}.
As we will show, however, much can be said on Problem~1 under very mild
assumptions on $\gge$,
and a robust theory can be built that provides several a~priori estimates on the
minimizers $w_\eps$. In some cases, basically when $\gge(w)$ is quadratic in the
highest order derivatives of $w$, Problem~1 can be completely solved without any other assumption. In \emph{all} cases,
however, up to subsequences the limit \eqref{wlimite} always exists and the estimates
on $w_\eps$ entail the fulfillment of \eqref{eq:condin2}. When \eqref{eq::1} is highly nonlinear,
the general estimates still apply, but additional work is needed to get
stronger compactness on $w_\eps$ and possibly obtain \eqref{eq::1} in the limit
(of course such further estimates, if any, will depend on the particular structure
of $\gge(w)$, and should be obtained \emph{ad hoc} on a case--by--case basis).

The variational approach suggested by Problem~\ref{prob1} is by genuine minimization,
a completely new and unconventional feature, when it comes to \emph{hyperbolic} equations.
The typical case is when $\gge$ is a convex (lower semicontinuous, etc.) functional of
the Calculus of Variations
(possibly depending on $x$, $w$ and some of its \emph{spatial} derivatives): in this
case $\feps$ in \eqref{eq:deffeps} inherits the good properties of $\gge$, and the existence
of  $w_\eps$ (a minimizer of $\feps$ subject to \eqref{eq:condin2}) is not an issue.
Moreover, one may try to exploit several powerful techniques such as the theory
of regularity for minimizers to get strong compactness on
$w_\eps$ and pass to the limit in \eqref{wlimite}.

We believe that these features are a major point of interest of
the present work. Indeed on the one hand our results provide a
new, general starting point for the investigation of a wide class
of hyperbolic problems, and on the other they allow one to use
methods (coming from the elliptic theory) that have never been
applied before in this context. Thus, our framework might
hopefully help in shedding new light on several long--standing
open problems in the theory of nonlinear hyperbolic equations.

We also point out that although the fourth order equation \eqref{4ord} has the structure of a singularly perturbed equation, this
fact is never used in our results, that are simply based on the properties of minimizers of the functional $F_\eps$. For
instance, no estimates on the third and fourth order derivatives are required.
\medskip

Our approach also works with an extra (dissipative) term
in the right hand side of \eqref{eq::1}, namely
\begin{equation}
\label{eq::2}
w''(t,x)=-\nabla \gge\big(w(t,\cdot)\bigr)(x)
-\nabla \diss\big(w'(t,\cdot)\bigr)(x),\quad
(t,x)\in \R^+\times \R^n
\end{equation}
where $\diss:H\to [0,+\infty)$ is a G\^ateaux differentiable
functional, defined on a suitable Hilbert space $H\embed L^2(\R^n)$.
For the sake of simplicity, contrary to $\gge$, we will assume that $\diss$ is a \emph{quadratic} form
on $H$. Note that, while $\nabla\gge$ is computed
at $w$, $\nabla\diss$ is computed at $w'$:
if, for instance, both $\gge$ and $\diss$ are the Dirichlet integral, then \eqref{eq::2}
reduces to the strongly damped wave equation $w''=\Delta w+\Delta w'$.
The reader is suggested to look at Section \ref{esempi}, where we discuss
several examples of hyperbolic problems
(with or without dissipative terms) that fit into our scheme.

For equations with dissipative terms the counterpart to Problem 1 is

\begin{problem}[Dissipative case]\label{prob2}
Let $w_\eps$ be a minimizer of the functional
\begin{equation}
  \label{eq:deffepsdis}
\frac{\eps^2}{2}\int_0^\infty\!\!\!\int_{\R^n} \emte
 |w''(t,x)|^2 \,dxdt
+
\int_0^\infty\emte
\bigl\{
\gge(w(t,\cdot))
+
\eps
\diss(w'(t,\cdot))
\bigr\}
\,dt
\end{equation}
subject to the boundary conditions \eqref{eq:condin2}. Investigate
the existence of a limit for $\weps$ as in \eqref{wlimite},
and see if it solves the Cauchy Problem \eqref{eq::2}$\&$\eqref{eq:condin2}.
\end{problem}
As before, the functional \eqref{eq:deffepsdis} relates to \eqref{eq::2} via
its Euler--Lagrange equation
\[
\eps^2 \bigl(e^{-t/\eps} w''_\eps\bigr)''+ e^{-t/\eps}
\nabla
\gge\bigl(w_\eps(t,\cdot)\bigr)(x)
-
\eps
\bigl(
e^{-t/\eps}
\nabla
\diss\bigl(w'_\eps(t,\cdot)\bigr)(x)
\bigr)'=0,
\]
namely,
\[
\eps^2 w''''_\eps-2\eps w'''_\eps+w''_\eps
+
\nabla
\gge\bigl(w_\eps(t,\cdot)\bigr)(x)
+
\nabla
\diss\bigl(w'_\eps(t,\cdot)\bigr)(x)
-\eps
\bigl(
\nabla
\diss\bigl(w'_\eps(t,\cdot)\bigr)(x)\bigr)'\!\! =0
\]
which, formally, reduces to \eqref{eq::2} when $\eps\downarrow 0$.

Also in the dissipative cases our results provide estimates for the minimizers $\weps$, existence of a limit $w$,
and in general all the properties described above.
\medskip

A further point of interest is that, as is well known, the energy
\[
{\mathcal E}(t)=
\frac 1 2\int_{\R^n} |w'(t,x)|^2\,dx+
\gge\bigl(w(t,\cdot)\bigr)
\]
is formally preserved by the solutions of equation \eqref{eq::1}, while for equation \eqref{eq::2} the presence of dissipative
terms entails that the preserved quantity is
\[
{\mathcal E}(t)+
2\int_0^t \diss\bigl(w'(t,\cdot)\bigr)\,dt.
\]
Generally, however,  energy conservation is purely formal, since weak solutions are not regular enough
to justify the computations needed in its proof. Our solutions are no exception, but in all cases they satisfy the ``energy
inequalities''
\[
{\mathcal E}(t) \le {\mathcal E}(0) \quad\text{and}\quad {\mathcal E}(t)+
2\int_0^t \diss\bigl(w'(t,\cdot)\bigr)\,dt \le {\mathcal E}(0).
\]
for equations \eqref{eq::1} and \eqref{eq::2} respectively.

Finally, we point out that our results are stated for functions defined in the whole of $\R^n$. This choice is motivated
as this is a model case of particular interest. However our results hold, without significative changes, also in different contexts,
for instance for functions defined on an open subset $\Omega$ of $\R^n$ with Dirichlet or Neumann conditions imposed on $\partial \Omega$.
\medskip

The paper is organized as follows. The main results are stated in Section 2 and proved in sections 5 and 6. Section 3 
contains preliminary results and Section 4 is devoted to the key argument for the construction of the a priori estimates. Finally, several examples
are reported in Section 7.
\medskip

\begin{remnot}
Throughout the paper, a prime as in $v'$, $v''$ etc.
denotes partial
differentiation with respect to the time variable $t$.
For functions defined in spacetime we will write freely $u(t,x)$ or $u(t)$.
So if
$u(t, \cdot)$ is an element of a space $X$ and ${\cal G}$ is a functional on $X$, we will write indifferently
${\cal G}(u(t, \cdot))$ or ${\cal G}(u(t))$.
Moreover,  through the rest of the paper
symbols as $\int v\,dx$ will always
denote spatial integrals extended to the whole
of $\R^n$, and short forms such as $L^2$, $H^1$ etc. will  denote $L^2(\R^n)$, $H^1(\R^n)$ etc. 
Finally,  $\langle\cdot,\cdot\rangle$ will denote the
duality pairing between a Banach space $X$ and its dual $X'$, the space $X$ being
clear from the context.
\end{remnot}

\section{Functional setting and main results}
The functional $\feps(w)$ to be minimized, subject to the boundary conditions \eqref{eq:condin2},
is defined by \eqref{eq:deffeps} in the non--dissipative case, and by \eqref{eq:deffepsdis}
in the dissipative
case. We shall treat  the
two cases simultaneously, by letting
\begin{equation}
\begin{split}
  \label{eq:deffepsmisto}
  \feps(w)=&
\frac{\eps^2}{2}\int_0^\infty\!\!\!\int_{\R^n} \emte
 |w''(t,x)|^2 \,dxdt\\
&+
\int_0^\infty\emte
\bigl\{
\gge(w(t,\cdot))
+
\kappa\eps
\diss(w'(t,\cdot))
\bigr\}
\,dt\qquad (\kappa\in\{0,1\}),
\end{split}
\end{equation}
where the parameter $\kappa\in\{0,1\}$ plays the role of an on/off variable.
Dealing with Problem~\ref{prob2} (dissipative case) one should let
$\kappa=1$, while dealing with Problem~\ref{prob1} (non--dissipative case)
one should let $\kappa=0$ and ignore the functional
$\diss$.

Concerning the functionals $\gge$ and $\diss$, we make the following assumptions:
\begin{itemize}
\item[\bf(H1)] The functional
$\gge:L^2\to [0,+\infty]$ is lower semicontinuous in the weak topology, i.e.,
\begin{equation}
\label{lsci} \gge(v)\leq \liminf_{k\to\infty} \gge(v_k)\quad
\text{whenever $v_k\rightharpoonup v$ in $L^2$.}
\end{equation}
Moreover we assume that $\gge(v)<\infty \iff v\in W$,
a Banach space with
\begin{equation}
\label{densembed}
C^\infty_0\embed W\embed L^2\qquad\text{(dense and continuous inclusions).}
\end{equation}

We also assume that
$\gge$ is G\^ateaux differentiable on $W$, and that its
derivative $\nabla\gge:W\to W'$ satisfies the estimate
\begin{equation}
\label{stimagattheta} \Vert \nabla \gge(v)\Vert_{W'}
\leq C\bigl(1+\gge(v)^\theta\bigr),
\quad
C\geq 0,\quad\theta\in (0,1),\quad\forall v\in W.
\end{equation}

\item[\bf(H2)] If $\kappa=1$, we assume that
$\diss:L^2\to [0,+\infty]$ is a quadratic functional
\begin{equation}\label{eq:strutD}
\diss(v)=
\begin{cases}
\frac 1 2 B(v,v)&\text{if $v\in H$,}\\
+\infty &\text{if $v\in L^2\setminus H$}
\end{cases}
\end{equation}
where $B:H\times H\to \R$ is a symmetric, bounded, nonnegative bilinear form
on a Hilbert space $H$ with the  norm
$\Vert v\Vert_H^2=\Vert v\Vert_{L^2}^2+2\diss(v)$, and
 such that
\begin{equation}
\label{HembedL2}
C^\infty_0\embed H\embed L^2\qquad\text{(dense and continuous inclusions).}
\end{equation}
If $\kappa=0$, for definiteness we set $\diss\equiv 0$ and $H=L^2$.
\end{itemize}

\begin{remark}\label{rema1}
If $\nabla^k v$ denotes the tensor of all $k$--th partial derivatives of $v$,
a Dirichlet--like functional
\[
\gge(v)=\frac 1 p\int_{\R^n} \left| \nabla^k v(x)\right|^p\,dx\quad
\text{($p> 1$)}
\]
satisfies assumption (H1) with $W$ the Banach space of all $L^2$ functions $v$
such that $\nabla^k v\in L^p$, endowed with its natural norm. Since
\begin{equation}\label{gradiente}
\langle \nabla \gge(v),\eta\rangle=
\int_{\R^n} \left| \nabla^k v(x)\right|^{p-2}\nabla^k v(x)\cdot \nabla^k\eta(x)\,dx,
\quad
v,\eta\in W,
\end{equation}
we see that \eqref{stimagattheta} holds with $\theta=1-1/p$.
In view of the embeddings
\eqref{densembed}, the term $\nabla \gge(w(t, \cdot))$ in equations \eqref{eq::1} and \eqref{eq::2},
as a distribution (note that $W'\embed {\mathcal D}'$ by
\eqref{densembed}), acts as a differential operator (linear when $p=2$) of order $2k$. Note also
that the functional $\gge$ need not be convex. 
\end{remark}

\begin{remark}\label{remtypicalH}
The typical functional $\diss$ fulfilling (H2) has the form
\begin{equation}\label{typicalH}
\diss(v)=
\frac 1 2
\sum_{j\in S} \int_{\R^n}
\left\vert \partial^j v \right\vert^2\,dx
\end{equation}
where $S\subset {\mathbb N}^n$ is any finite set of multi--indices and $\partial^j$ denotes partial differentiation.
Here $H$ is
the space of those $v\in L^2$ such that $\diss(v)<+\infty$, and
$\nabla H(v)$, as a distribution (note that $H'$ is a space of distributions by
\eqref{HembedL2}) is the differential operator
$\sum_{j\in S}(-1)^{|j|} \partial^{2j}$.
\end{remark}

\begin{remark}\label{rema2}
Assumptions (H1) and (H2) are \emph{additively stable}. More precisely, if $\gge_i:L^2\to
[0,\infty]$ ($i=1,2$) are two functionals each satisfying (H1) (with Banach
spaces $W_i$, constants $\theta_i$ etc.), then the sum $\gge=\gge_1+\gge_2$
still satisfies (H1), now with $W=W_1\cap W_2$ normed by $\Vert\cdot\Vert_W=
\Vert\cdot\Vert_{W_1}+\Vert\cdot\Vert_{W_2}$
(this makes sense, in view of \eqref{densembed}).
In particular, by Young inequality,
\eqref{stimagattheta} will hold true with $\theta=\max\{\theta_1,
\theta_2\}$.

Finally, a similar argument applies to (H2).
\end{remark}

\begin{theorem}[non--dissipative case]\label{mainteo1}
Given $w_0,w_1\in W$ and $\eps\in(0,1)$, under assumption (H1) the
functional $\feps$ defined in \eqref{eq:deffeps} has a minimizer
$w_\eps$ in the space $H^2_{\rm{loc}}([0,\infty);L^2)$
 subject to
\eqref{eq:condin2}. Moreover:
\begin{enumerate}
\item[(a)] Estimates. There exists a constant $C$, independent of
$\eps$,  such that
\begin{equation}
  \label{eq:stimaapriori}
  \int_\tau^{\tau+T} \gge\bigl(w_\eps(t,\cdot)\bigr)\,dt
\leq CT\quad \forall\tau\geq 0,\quad\forall T\geq \eps,
\end{equation}
\begin{equation}
  \label{eq:stimaaprioriprime}
\int_{\R^n} |w_\eps'(t,x)|^2\,dx\leq C,\quad \int_{\R^n}
|w_\eps(t,x)|^2\,dx\leq C(1+t^2) \quad\forall t\geq 0,
\end{equation}
\begin{equation}
  \label{eq:stimaaprioridue}
  \left\Vert w''_\eps
\right\Vert_{L^\infty(\Rpiu;W')} \leq C.
\end{equation}

\item[(b)] Convergence. Every sequence  $w_{\eps_i}$ (with
$\eps_i\downarrow 0$) admits a subsequence which is convergent,
in the weak topology of $H^1((0,T);L^2)$ for every $T>0$, to a function
$w$ such that
\begin{align}
\label{eq:welleinfinito1} w\in H^1\loc([0,\infty);L^2),\quad
w'\in L^\infty(\R^+;L^2),\quad w''\in L^\infty(\R^+;W').
\end{align}
Moreover, $w$ satisfies the initial conditions \eqref{eq:condin2}.
\item[(c)] Energy inequality. Letting
\begin{equation}
  \label{eq:energiaonde}
  {\mathcal E}(t)=\frac 1 2\int_{\R^n}
|w'(t,x)|^2\,dx+ \gge\bigl(w(t,\cdot)\bigr),
\end{equation}
the function $w(t,x)$ satisfies the energy inequality
\begin{equation}
  \label{eq:quasiconservation}
  {\mathcal E}(t)\leq {\mathcal E}(0)=
\frac 1 2\int_{\R^n} |w_1(x)|^2\,dx+\gge(w_0) \quad\text{for a.e.
$t>0$.}
\end{equation}
\end{enumerate}
\end{theorem}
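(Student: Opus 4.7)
The plan is to split the argument into three pieces: existence of $\weps$ via the direct method, the a~priori estimates via carefully chosen competitor comparisons, and convergence plus the energy inequality via weak compactness and lower semicontinuity.

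For existence, I would work in the affine subspace of $H^2\loc([0,\infty);L^2)$ cut out by \eqref{eq:condin2}. The quadratic first term of $\feps$ is weakly lower semicontinuous by convexity, and the second is weakly lsc by combining (H1) with Fatou's lemma, since weak $H^2\loc$ convergence implies weak $L^2$ convergence of $w(t,\cdot)$ for a.e.\ $t$. Admissibility is ensured by a competitor compactly supported in time, for instance $\bar w(t,x)=\chi(t)\bigl(w_0(x)+tw_1(x)\bigr)$ with $\chi$ a smooth cut-off equal to $1$ near $0$; the trajectory $\{\bar w(t,\cdot)\}$ stays in a bounded subset of $W$, so $\feps(\bar w)<\infty$. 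Any minimizing sequence is then bounded in weighted $H^2$, and a minimizer $\weps$ is extracted along a weak limit.

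For the estimates in (a), the crux is to exploit minimality through well-chosen variations. For \eqref{eq:stimaapriori} and \eqref{eq:stimaaprioriprime} I would compete $\weps$ against a modification $\tilde w$ obtained by replacing $\weps$ on an interval $[\tau,\tau+T]$ with an affine-in-$t$ interpolant matching $\weps$ and $\weps'$ at the endpoints: the $|w''|^2$ contribution drops out there, and the inequality $\feps(\weps)\leq\feps(\tilde w)$ translates into an upper bound on the time-averaged $\gge(\weps)$ in terms of endpoint kinetic data. A similar comparison where $\weps$ is prolonged by its affine tangent from a chosen time onwards yields the uniform kinetic bound on $\weps'$, and the polynomial $L^2$-bound on $\weps$ follows by integration. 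The distributional estimate \eqref{eq:stimaaprioridue} instead comes from the Euler--Lagrange equation, rewritten as $(\emte\,w''_\eps)''=-\eps^{-2}\emte\,\nabla\gge(\weps)$ and integrated twice in $t$ to give the representation
\[
w''_\eps(t)=-\int_0^\infty u\,e^{-u}\,\nabla\gge\bigl(\weps(t+\eps u,\cdot)\bigr)\,du
\]
in $W'$; applying \eqref{stimagattheta} together with the averaged bound already established yields the $L^\infty(\Rpiu;W')$ estimate.

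For part (b), the estimates in (a) provide weak compactness of $\{\weps\}$ in $H^1((0,T);L^2)$ for every $T$; a diagonal extraction plus weak-$\ast$ limits in $L^\infty(\Rpiu;L^2)$ and $L^\infty(\Rpiu;W')$ supply the limit $w$ with the claimed regularity. The initial value $w(0)=w_0$ passes to the limit by weak continuity of the $H^1$ trace at $t=0$, while $w'(0)=w_1$ is preserved because the uniform $L^\infty((0,T);W')$ bound on $w''_\eps$ makes $\weps'$ equicontinuous into $W'$, letting one identify the initial velocity. For the energy inequality (c), I would first establish $\mathcal E_\eps(t)\leq \mathcal E_\eps(0)$ at the level of $\weps$ by a competitor comparison analogous to the one used for (a), applied sharply on $[t,\infty)$, and then pass to the limit using lower semicontinuity of the kinetic term (from weak $L^2$ convergence of $\weps'$) together with (H1) applied to $\gge$. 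The main obstacle is the competitor construction itself: because $\gge$ is only assumed lsc with the one-sided derivative bound \eqref{stimagattheta}, convex combinations and time translations cannot be used freely, and the competitors must preserve the Dirichlet-type data at $t=0$ while keeping $\gge$-values controlled on the whole half-line, uniformly against the singular weight $\emte$ as $\eps\downarrow 0$ — this quantitative sharpness is precisely the technical core the authors locate in Section~4.
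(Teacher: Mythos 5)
Your skeleton for existence and for part (b) matches the paper (direct method with an affine-in-$t$ competitor; weak compactness plus the $L^\infty(\Rpiu;W')$ bound on $w''_\eps$ to recover $w'(0)=w_1$), and your representation formula $w''_\eps(t)=-\int_0^\infty u e^{-u}\nabla\gge(w_\eps(t+\eps u))\,du$ is exactly the paper's route to \eqref{eq:stimaaprioridue}. But the core of part (a) — and hence of (c) — has a genuine gap. First, a technical one: an affine-in-$t$ interpolant cannot match both $w_\eps$ and $w'_\eps$ at both endpoints of $[\tau,\tau+T]$ (four conditions, two degrees of freedom), and even a Hermite-type replacement is circular, since bounding $\gge$ along the competitor via \eqref{growth} requires pointwise control of $\gge(w_\eps(\tau))$ and $\Vert w_\eps(\tau)\Vert_W$, which is precisely what is to be proved. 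Second, and more fundamentally: no comparison of $\feps(w_\eps)$ with an outer competitor can produce the uniform kinetic bound $\int|w'_\eps(t)|^2dx\le C$, because $|w'|^2$ simply does not appear in $\feps$ — minimality against replacements of $w_\eps$ by other trajectories only ever bounds the integrand of $\feps$, i.e.\ $|w''_\eps|^2$ and $\gge(w_\eps)$ in weighted time-average.

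What is missing is the paper's key device: \emph{inner variations}. One perturbs the minimizer by a time reparametrization, $U(t)=u_\eps(t-\delta g(t))+t\delta\eps g'(0)w_1$ in the rescaled variable, and the stationarity condition $\frac{d}{d\delta}\jeps(U)|_{\delta=0}=0$ produces an identity (\eqref{energyident}) in which the derivative of the kinetic energy $K_\eps'$ appears, multiplied by $g''$. Localizing $g$ near a time $T$ yields a pointwise differential relation from which one deduces that the \emph{approximate energy} $\FF(s)=K_\eps(s)+\int_s^\infty(t-s)e^{-(t-s)}\gge(u_\eps(t))\,dt$ is finite and decreasing, with $\FF(s)\le\frac12\Vert w_1\Vert_{L^2}^2+\gge(w_0)+C\eps$. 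The bound $K_\eps\le\FF$ gives \eqref{eq:stimaaprioriprime}; the bound on the averaged term gives \eqref{eq:stimaapriori} by a covering argument and feeds the representation formula for \eqref{eq:stimaaprioridue}. Note also that the potential energy in $\FF$ is \emph{averaged}, not pointwise: the paper never proves the pointwise inequality ${\mathcal E}_\eps(t)\le{\mathcal E}_\eps(0)$ for the minimizers that you assert in (c); the energy inequality for the limit $w$ is instead obtained by converting the averaged bound into window integrals (Lemma \ref{lemmatech}), passing to the limit by lower semicontinuity, and only then shrinking the windows. Without the inner-variation identity and the monotone approximate energy, parts (a) and (c) of your argument do not close.
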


\begin{theorem}[dissipative case]\label{mainteo2}
Given $w_0 \in W$, $w_1\in W\cap H$ and $\eps\in(0,1)$, under
assumptions (H1) and (H2) the functional $\feps$ defined in
\eqref{eq:deffepsdis} has a minimizer $w_\eps$, in the space
$H^2_{\rm{loc}}([0,\infty);L^2)$
 subject to
\eqref{eq:condin2}. Moreover, all claims of Theorem~\ref{mainteo1}
apply, with the following extensions and modifications:
\begin{enumerate}
\item[(a)] The additional estimate
\begin{equation}
\label{provvD}
 \int_0^\infty \diss(w_\eps'(t))\, dt \le C
\end{equation}
holds true, while \eqref{eq:stimaaprioridue} should be replaced with
\begin{equation}
  \label{eq:stimaaprioriduebis}
  \left\Vert  w''_\eps
\right\Vert_{L^\infty(\Rpiu;W')+L^2(\Rpiu;H')} \leq C.
\end{equation}
\item[(b)] The part on $w''$ in \eqref{eq:welleinfinito1} should be
replaced with
\begin{equation}
\label{wsecondobis}
w''\in L^\infty(\R^+;W')+L^2(\Rpiu;  H').
\end{equation}
Moreover, the convergence $w_\eps'\to w'$ holds in a stronger sense, namely
\begin{equation}
\label{convL2H}
w_\eps'\rightharpoonup w'\quad\text{weakly in $L^2((0,T);H)$, for every $T>0$.}
\end{equation}

\item[(c)] With the same ${\mathcal E}(t)$, the inequality \eqref{eq:quasiconservation}
is replaced with
\begin{equation}
  \label{eq:quasiconservation2}
  {\mathcal E}(t)+2\int_0^t \diss(w'(t,\cdot))\,dt\leq {\mathcal E}(0)
  \quad\text{for
a.e. $t>0$.}
\end{equation}
\end{enumerate}
\end{theorem}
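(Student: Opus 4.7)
The plan is to reproduce the proof of Theorem~\ref{mainteo1}, tracking the additional term $\kappa\eps\diss(w'_\eps)$ at every stage and exploiting the new hypothesis (H2). Existence of a minimizer $\weps$ in $H^2\loc([0,\infty);L^2)$ subject to \eqref{eq:condin2} is established by the direct method: the admissible class is nonempty (one may take, for instance, $w(t,x)=w_0(x)+tw_1(x)$ smoothly truncated for large $t$, which belongs to the domain of $\feps$ precisely because $w_0\in W$ and $w_1\in W\cap H$); the quadratic contribution $\tfrac{\eps^2}{2}\int \emte|w''|^2\,dxdt$ provides coercivity in $H^2\loc$ once the boundary conditions are pinned; and weak lower semicontinuity of $\feps$ is inherited from (H1) for $\gge$ and from the nonnegativity and $L^2$-lower semicontinuity of the quadratic form $\diss$ postulated in \eqref{eq:strutD}.

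For the a priori estimates I would feed the full functional with $\kappa=1$ into the inner-variation machinery of Section~4. The key device is the comparison of $\weps$ with a one-parameter family of competitors obtained from a time-shift $\weps(\delta+t,\cdot)$, corrected near $t=0$ so as to preserve \eqref{eq:condin2}; differentiating at $\delta=0$ produces the same identity as in the non-dissipative case, with the extra term $\kappa\eps\int\emte\diss(w'_\eps)\,dt$ entering on one side. Exploiting the nonnegativity of every other summand one obtains \eqref{provvD}, uniformly in $\eps$, together with the bounds \eqref{eq:stimaapriori} and \eqref{eq:stimaaprioriprime} already present in Theorem~\ref{mainteo1}. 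The modified bound \eqref{eq:stimaaprioriduebis} is then read off from a spatial variation in $C^\infty_0$: the resulting distributional identity decomposes $w''_\eps$ into the $\nabla\gge(\weps)$-contribution, bounded in $L^\infty(\Rpiu;W')$ by \eqref{stimagattheta} combined with \eqref{eq:stimaapriori}, and the $\nabla\diss(w'_\eps)$-contribution, bounded in $L^2(\Rpiu;H')$ by \eqref{provvD} together with the continuity of the bilinear form $B$ on $H$.

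Once these estimates are in place, weak convergence along a subsequence $\weps_i$ proceeds exactly as in Theorem~\ref{mainteo1}; the new assertion \eqref{convL2H} comes essentially for free, since $\|w'_{\eps_i}\|^2_{L^2((0,T);H)}=\int_0^T\|w'_{\eps_i}\|_{L^2}^2\,dt+2\int_0^T\diss(w'_{\eps_i})\,dt$ is uniformly bounded thanks to \eqref{eq:stimaaprioriprime} and \eqref{provvD}, yielding weak compactness in $L^2((0,T);H)$ towards the same limit $w'$; the statement \eqref{wsecondobis} follows from \eqref{eq:stimaaprioriduebis} by taking weak-$*$ limits. The initial conditions \eqref{eq:condin2} are inherited as in the non-dissipative case. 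For the energy inequality \eqref{eq:quasiconservation2} I would pass to the limit in an approximate energy identity satisfied by $\weps$, invoking the weak lower semicontinuity of $\gge$ on $L^2$, that of $\diss$ on $H$, and Fatou's lemma to control $\int_0^t\diss(w'(s))\,ds$ from below by the corresponding approximants. The main technical obstacle I anticipate is the derivation of \eqref{provvD}: the inner-variation identity only produces $\diss(w'_\eps)$ with an extra $\eps$ prefactor, and the correct calibration of the $\kappa\eps$ factor in \eqref{eq:deffepsmisto} is precisely what guarantees survival of this bound in the limit $\eps\downarrow0$.
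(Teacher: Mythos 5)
Your proposal follows essentially the same route as the paper: inner variations in time (with the $w_1$-correction at $t=0$) produce a decreasing approximate energy in which the dissipative term carries the factor $\kappa/\eps$, so that the resulting bound $\int_0^\infty\diss(u'_\eps(t))\,dt\le C\eps$ rescales exactly to \eqref{provvD}; likewise the splitting of $w''_\eps$ into an $L^\infty(\Rpiu;W')$ piece (from $\nabla\gge$) and an $L^2(\Rpiu;H')$ piece (from $\nabla\diss$, via $\Vert\nabla\diss(v)\Vert_{H'}\le\sqrt{2\diss(v)}$), the weak compactness in $L^2((0,T);H)$ giving \eqref{convL2H}, and the semicontinuity argument for \eqref{eq:quasiconservation2} all coincide with the paper's proof. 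The one step you compress is the pointwise-in-time control of $w''_\eps$: a bare spatial variation only yields the fourth-order distributional equation \eqref{4ord}, and to isolate $\langle u''_\eps(T),h\rangle$ one needs the specific test functions $g_\delta(t)h(x)$ with $g_\delta''$ concentrating at $t=T$, leading to the representation formula \eqref{reprformula}.
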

Observe that, under so general assumptions as in Theorem~\ref{mainteo1}
(or~\ref{mainteo2}), we do not claim that the limit function $w$
satisfies \eqref{eq::1} (or \eqref{eq::2}). On the other hand, to our knowledge
there are no counterexamples that rule out this possibility.
Of course, to
perform this step (by which one would completely solve Problem~\ref{prob1} or
\ref{prob2}) one should obtain extra estimates
exploiting the particular structure of the
functional $\gge$, on a case by case basis. In some cases,
however, the estimates of Theorem~\ref{mainteo1} (or \ref{mainteo2} if
$\kappa=1$) are enough to pass to the limit in the main equation,
as the following result illustrates.

\begin{theorem}\label{mainteo3}
Assume that, for some real number $m >0$,
\begin{equation}
\label{struct2} \gge(v)= \frac 1 2 \Vert v\Vert_{\dot H^m}^2
+ \sum_{0\leq k<m} \frac {\lambda_k}{p_k}
\int_{\R^n} \left| \nabla^k v(x)\right|^{p_k} \,dx\qquad
 (\lambda_k\geq 0, \quad p_k>1).
\end{equation}
Then assumption (H1) is fulfilled,
if $W$ is the space of those
$v\in H^m$ with $\nabla^k v\in L^{p_k}$ $(0\leq k<m)$ endowed with its
natural norm.

Moreover, the limit function $w$ obtained via Theorem~\ref{mainteo1} (or \ref{mainteo2}
if $\kappa=1$)) solves, in the sense of distributions, the hyperbolic equation~\eqref{eq::1} (or \eqref{eq::2} if
$\kappa=1$).
\end{theorem}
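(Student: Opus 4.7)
The plan splits into two tasks: verifying (H1) for the specific $\gge$ in \eqref{struct2}, and then passing to the limit $\eps\downarrow 0$ in the Euler--Lagrange equation \eqref{4ord}. The first is essentially a direct application of Remarks~\ref{rema1}--\ref{rema2}. The seminorm piece $\tfrac12\|v\|_{\dot H^m}^2$ is the quadratic form of the positive operator $(-\Delta)^m$, hence weakly $L^2$-lower semicontinuous, G\^ateaux differentiable on $\dot H^m$, with $\nabla\gge(v)=(-\Delta)^m v$ satisfying \eqref{stimagattheta} with $\theta=1/2$. Each lower-order summand with $k<m$ falls directly under Remark~\ref{rema1} with $\theta_k=1-1/p_k$, and additive stability (Remark~\ref{rema2}) gives (H1) for the sum on the space $W$ described in the statement, with $\theta=\max\bigl\{1/2,\,\max_k(1-1/p_k)\bigr\}$.

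For the convergence part, let $w_\eps$ be the minimizers furnished by Theorem~\ref{mainteo1} (or~\ref{mainteo2} if $\kappa=1$) and $w$ the limit. I would test the distributional EL equation against an arbitrary $\phi\in C^\infty_c((0,\infty)\times\R^n)$ and, after two integrations by parts in $t$, rewrite it as
\[
\int\!\!\!\int w''_\eps\bigl(\eps^2\phi''+2\eps\phi'+\phi\bigr)\,dxdt
+\int\langle\nabla\gge(w_\eps(t)),\phi(t,\cdot)\rangle\,dt
+\kappa\!\int\!\langle\nabla\diss(w'_\eps(t)),\phi(t,\cdot)\rangle\,dt=0.
\]
The $\eps^2$- and $\eps$-terms vanish in the limit by \eqref{eq:stimaaprioridue} or \eqref{eq:stimaaprioriduebis}, which control $w''_\eps$ in the appropriate dual norm. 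The genuine $w''_\eps$-term passes to $w''$ in $\mathcal D'$ because $w'_\eps\rightharpoonup w'$ weakly-$*$ in $L^\infty(\Rplus;L^2)$. The dissipative contribution, being linear in $w'_\eps$ (since $\diss$ is quadratic), passes to the limit using the weak convergence \eqref{convL2H}.

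The main work, and the expected main obstacle, is the limit of $\nabla\gge(w_\eps)$. Because $\tfrac12\|v\|_{\dot H^m}^2\le\gge(v)$, estimate \eqref{eq:stimaapriori} shows that $w_\eps$ is bounded in $L^2_{\rm loc}(\Rplus;H^m)$; combined with the $L^\infty(\Rplus;L^2)$-bound on $w'_\eps$ from \eqref{eq:stimaaprioriprime}, I would apply Aubin--Lions on an arbitrary bounded spatial ball $B_R$ (where $H^m(B_R)\embed\embed H^s(B_R)$ for any $s<m$ by Rellich) to conclude, up to a further subsequence, that $w_\eps\to w$ strongly in $L^2_{\rm loc}(\Rplus;H^s_{\rm loc}(\R^n))$, and hence $\nabla^k w_\eps\to\nabla^k w$ almost everywhere on $\Rplus\times\R^n$ for every $k<m$. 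Together with the uniform $L^{p_k}_{\rm loc}$-bound on $\nabla^k w_\eps$ coming from \eqref{eq:stimaapriori}, this yields the weak convergence $|\nabla^k w_\eps|^{p_k-2}\nabla^k w_\eps\rightharpoonup |\nabla^k w|^{p_k-2}\nabla^k w$ in $L^{p_k/(p_k-1)}_{\rm loc}$, which is enough to pair with $\nabla^k\phi$. The top-order linear piece $(-\Delta)^m w_\eps\rightharpoonup(-\Delta)^m w$ in $L^2_{\rm loc}(\Rplus;H^{-m})$ by the weak convergence of $w_\eps$ in $L^2_{\rm loc}(\Rplus;H^m)$. The delicate point is that spatial compactness is only available after localization (since $\R^n$ is unbounded), so one must insert a cutoff in $x$ before invoking Aubin--Lions; once this is handled the a.e.\ convergence of $\nabla^k w_\eps$ follows, and the remaining steps are routine.
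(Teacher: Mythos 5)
Your proposal is correct and follows essentially the same route as the paper: (H1) is verified via Remarks~\ref{rema1} and~\ref{rema2}, and the passage to the limit rests on the Euler--Lagrange identity together with Aubin--Lions compactness on cylinders $(0,T)\times B$ (the paper cites Thm.~5.1 of \cite{Lions}) to get strong convergence of $\nabla^k w_\eps$, which combined with the uniform $L^{p_k}$ bounds from \eqref{eq:stimaapriori} handles the nonlinear terms --- the paper phrases this as strong $L^1(Q)$ convergence of $|\nabla^k w_\eps|^{p_k-2}\nabla^k w_\eps$ rather than weak $L^{p_k/(p_k-1)}$ convergence, but both suffice to pair against $\nabla^k\eta$. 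The only bookkeeping slip is that after the substitution $\eta=e^{t/\eps}\psi$ the dissipative term is paired with $\psi+\eps\psi'$ rather than with $\psi$ alone (cf.~\eqref{eulerp2}), but the extra $\eps\psi'$ contribution vanishes in the limit by \eqref{provvD} and \eqref{gradDDE}, so nothing is lost.
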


\begin{remark}
In \eqref{struct2}, as usual, $\dot H^m$ is  the $L^2$
norm of $|\xi|^m \hat v(\xi)$, where $\hat v$ is the Fourier transform of $v$. The typical case is when
$m$ is integer, so that $\Vert v\Vert_{\dot H^m}^2$ reduces to $\Vert\nabla^m v\Vert_{L^2}^2$.
In this case (see Remark \ref{rema1}) the first term in \eqref{struct2} gives rise to a differential operator
of order $2m$ in the equations \eqref{eq::1} and \eqref{eq::2}.

On the other hand, in \eqref{struct2} $m$ may fail to be integer. In this case,
however, one can interpret the distribution $\nabla\gge(w)$ in \ref{eq::1} or \eqref{eq::2}
as a \emph{fractional} differential operator: this enables us to treat,
for instance, equations with the \emph{fractional Laplacian} (see Example~\ref{exFL}).
\end{remark}

Several variants are possible in the same spirit.
For instance, one may introduce nonconstant coefficients in \eqref{struct2}
(and possibly exploit G{\aa}rding--type inequalities to make $\gge(v)$
coercive), or consider more general lower--order terms with suitable
convexity and growth assumptions (e.g. powers of single partial derivatives as in \eqref{typicalH}).
Indeed, the central assumption
is that $\gge$ be quadratic (and coercive) in the highest
order terms, which makes the hyperbolic PDEs \eqref{eq::1} and \eqref{eq::2} quasilinear.

We end this section by discussing some consequences of assumption (H1) and (H2)
which will be used in the sequel.
First, \eqref{stimagattheta} implies the linear control
\begin{equation}
\label{stimagat}
\Vert \nabla \gge(v)\Vert_{W'}\leq C\left(1+\gge(v)\right)\quad
C\geq 0,\quad\forall v\in W.
\end{equation}
Moreover, \eqref{stimagattheta} entails Lipschitz continuity of
$\gge$ along rays, as follows.
Given $a,\overline{b}\in W$ with $\Vert \overline{b}\Vert_W=1$,
the function $f(\lambda)=\gge(a+\lambda \overline{b})$ 
is differentiable and \eqref{stimagattheta} gives $|f'|\leq C (1+f^\theta)$.
From well known variants of the Gronwall Lemma, one has $f(\lambda)\leq C
\bigl(1+f(0)+ \lambda^{1/(1-\theta)}\bigr)$
and so
\begin{equation}
\label{growth}
\sup_{[a,a+b]} \gge \leq C \left(1+
\gge(a)+\Vert b\Vert_W^{\frac 1{1-\theta}}\right),\quad
\forall a,b\in W
\end{equation}
where $[a,a+b]$ is the segment in $W$ from $a$ to $a+b$.
Combining with \eqref{stimagat},
\begin{equation}
\label{growth3}
\sup_{[a,a+b]} \Vert \nabla \gge\Vert_{W'} \leq
C\left(1+ \gge(a)+\Vert b\Vert_W^{\frac 1{1-\theta}}\right),\quad
a,b\in W.
\end{equation}
Then, from Lagrange mean value theorem, for every $\delta\not=0$
\begin{equation*}
\left\vert
\frac {\gge(a+\delta b)-\gge(a)}{\delta}
\right\vert
\leq
\Vert b\Vert_W
\times
\sup_{[a,a+\delta b]} \Vert \nabla \gge\Vert_{W'}
\end{equation*}
and combining with \eqref{growth3},
\begin{equation}
\label{stimalip}
\left\vert
\frac {\gge(a+\delta b)-\gge(a)}{\delta}
\right\vert
\leq
C \Vert b\Vert_W \times
\left(1+ \gge(a)+\delta^{\frac 1{1-\theta}}\Vert b\Vert_W^{\frac 1{1-\theta}}\right),
\end{equation}
a quantitative bound for the Lipschitz constant of $\gge$. Thus, in particular,
\begin{equation}
\label{growth2}
\gge(a+\delta b) \leq
\gge(a)+
C\delta \Vert b\Vert_W \times
\left(1+ \gge(a)+\delta^{\frac 1{1-\theta}}\Vert b\Vert_W^{\frac 1{1-\theta}}\right).
\end{equation}

Finally, assumption (H2) entails that
$\diss$ is differentiable in $H$, with
\begin{equation}
\label{gradDDE}
\langle \nabla \diss(v), \eta\rangle=
B(v,\eta)
,\quad
\left\Vert \nabla \diss(v)\right\Vert_{H'}\leq
\sqrt{2 \diss(v)},\quad
v,\eta\in H.
\end{equation}
Moreover, $\diss$ is a fortiori weakly lower semicontinuous in $L^2$, namely
\begin{equation}
\label{lsci2} \diss(v)\leq \liminf_{k\to\infty} \diss(v_k)\quad
\text{whenever $v_k\rightharpoonup v$ in $L^2$.}
\end{equation}

\section{Existence of minimizers  and preliminary estimates}\label{sec2}

Since the space $H^2_{\text{loc}}([0,\infty);L^2)$ is
invariant under time dilations $t\mapsto \eps t$,
it is convenient
to introduce the simpler
functional
\begin{equation}
  \label{eq:defjeps}
  \jeps(u)=\int_0^\infty \emt\left(
\int \frac {|u''(t,x)|^2}{2\eps^2} \,dx
+\gge(u(t))+\ksue \diss(u'(t))\right)\,dt,
\end{equation}
equivalent to $\feps$ in \eqref{eq:deffepsmisto}
in that
$\feps(w)=\eps \jeps(u)$
whenever
$u,w\in H^2_{\text{loc}}([0,\infty);L^2)$
are related by the change of variable
$u(t,x)=w(\eps t,x)$. Of course,
the boundary conditions in \eqref{eq:condin2} must be scaled
accordingly, namely as in \eqref{eq:condiniz}.

The existence of minimizers $\weps$ for $\feps$ (as claimed in Theorems \ref{mainteo1}
and \ref{mainteo2}) then
follows from  the existence of minimizers $u_\eps$ for $\jeps$ and
\begin{equation}
  \label{eq:uv}
  u_\eps(t,x)=\weps(\eps t,x),\quad t\geq 0,\quad x\in{\R^n}.
\end{equation}

\begin{lemma}\label{existence2}
Given $\eps\in (0,1)$ and $w_0, w_1\in W$ (with $w_1\in W\cap H$ if $\kappa=1$) the
functional $\jeps$ has an absolute minimizer $u_\eps$, in the
class of those functions $u\in H^2_{\text{loc}}([0,\infty);L^2)$
satisfying the boundary conditions
\begin{equation}
  \label{eq:condiniz}
  u(0)=w_0,\quad u'(0)=\eps w_1.
\end{equation}
Moreover,
\begin{equation}
  \label{eq:stimalivellosharp}
  \jeps(u_\eps)\leq \gge(w_0)+C\eps.
\end{equation}
\end{lemma}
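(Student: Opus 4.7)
This is an application of the direct method of the Calculus of Variations, combined with the explicit choice of an affine competitor to produce the quantitative upper bound \eqref{eq:stimalivellosharp}.

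First, I would test the functional against the affine-in-time competitor $\bar u(t,x) := w_0(x) + \eps t\, w_1(x)$, which belongs to $H^2_{\text{loc}}([0,\infty);L^2)$, satisfies the boundary conditions \eqref{eq:condiniz}, and has $\bar u''\equiv 0$, so the first term of $\jeps(\bar u)$ vanishes. For the $\gge$-piece, the quantitative Lipschitz-along-rays estimate \eqref{growth2} applied with $a=w_0$, $b=w_1$, $\delta=\eps t$ gives
\begin{equation*}
\gge(\bar u(t))\le \gge(w_0)+C\eps t\,\Vert w_1\Vert_W\bigl(1+\gge(w_0)+(\eps t)^{1/(1-\theta)}\Vert w_1\Vert_W^{1/(1-\theta)}\bigr).
\end{equation*}
Integration against $e^{-t}$ on $(0,\infty)$ produces moments of exponentials (Gamma values), hence a contribution of $\gge(w_0)+O(\eps)+O\bigl(\eps^{1+1/(1-\theta)}\bigr)=\gge(w_0)+O(\eps)$ for $\eps\in(0,1)$. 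In the dissipative case, $\bar u'(t)=\eps w_1\in H$ (using $w_1\in H$ when $\kappa=1$), and quadraticity of $\diss$ gives $\ksue\diss(\bar u')=\kappa\eps\diss(w_1)$, which integrates to another $O(\eps)$ contribution. Altogether $\jeps(\bar u)\le\gge(w_0)+C\eps$.

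Next, I would run the direct method. Let $\{u_n\}\subset H^2_{\text{loc}}([0,\infty);L^2)$ be a minimizing sequence respecting \eqref{eq:condiniz}, so $\jeps(u_n)$ is bounded. The first term provides a bound on $u_n''$ in $L^2((0,T);L^2)$ for every $T>0$ (the weight $e^{-t}$ is bounded below on $[0,T]$); combined with the fixed initial data this forces $u_n$ to be bounded in $H^2((0,T);L^2)$ for every $T$. A diagonal extraction then yields a candidate $u_\eps\in H^2_{\text{loc}}([0,\infty);L^2)$ with $u_n\rightharpoonup u_\eps$ in each $H^2((0,T);L^2)$; the boundary values \eqref{eq:condiniz} survive because the trace maps $u\mapsto u(0)$ and $u\mapsto u'(0)$ are continuous on $H^2((0,T);L^2)\embed C^1([0,T];L^2)$.

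Finally, I would check weak lower semicontinuity of each term of $\jeps$. The kinetic piece is a positively-weighted $L^2$-norm of $u_n''$, hence is weakly lower semicontinuous. For the $\gge$-piece, the embedding $H^2((0,T);L^2)\embed C^1([0,T];L^2)$ together with equicontinuity in $t$ gives (after a further diagonal extraction) pointwise weak convergence $u_n(t)\rightharpoonup u_\eps(t)$ in $L^2$ for every $t\ge 0$; assumption (H1) yields $\gge(u_\eps(t))\le\liminf_n\gge(u_n(t))$ pointwise, which Fatou with weight $e^{-t}$ lifts to the integrated inequality. The $\diss$-piece is handled identically via \eqref{lsci2}, using that $u_n'(t)=\eps w_1+\int_0^t u_n''(s)\,ds$ converges pointwise weakly in $L^2$. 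Hence $\jeps(u_\eps)\le\liminf_n\jeps(u_n)=\inf\jeps$, so $u_\eps$ is a minimizer, and comparison with $\bar u$ establishes \eqref{eq:stimalivellosharp}. The step I expect to be most delicate is the passage from the $H^2$-weak convergence to the pointwise-in-time weak $L^2$-convergence required to activate the $L^2$-semicontinuity of $\gge$ and $\diss$: because the weight $e^{-t}$ forces integration on the full half-line, we must invoke Fatou on a pointwise inequality rather than a direct semicontinuity statement on a weighted Bochner space.
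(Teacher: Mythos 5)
Your proposal is correct and follows essentially the same route as the paper: the affine competitor $w_0+\eps t w_1$ estimated via \eqref{growth2} and the quadraticity of $\diss$ for the upper bound, then the direct method with local-in-time $H^2$ bounds on a minimizing sequence, pointwise-in-time weak $L^2$ convergence of $u_k(t)$ and $u_k'(t)$, and Fatou combined with \eqref{lsci} and \eqref{lsci2} for lower semicontinuity. The delicate step you flag (upgrading weak $H^2_{\rm loc}$ convergence to pointwise weak convergence in $L^2$ so as to apply the $L^2$-semicontinuity of $\gge$ and $\diss$) is exactly the mechanism the paper uses.
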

\begin{remark}
Throughout, the symbol $C$ will always denote (possibly different) constants that are
\emph{independent} of $\eps$ (but may depend on all the other
data, including the initial conditions $w_0, w_1$). 
\end{remark}
\begin{proof}
The function $\psi(t,x)=w_0(x)+\eps t w_1(x)$ satisfies the
boundary conditions \eqref{eq:condiniz}. We also have from
\eqref{growth2}, applied with $a=w_0$, $b=w_1$ and $\delta=\eps
t$, that
\[
\gge(w_0+\eps t w_1)\leq \gge(w_0)+C\eps t \left(1+\gge(w_0)+(\eps
t)^{\frac 1{1-\theta}}\right)
\]
having absorbed $\Vert w_1\Vert_W$ into $C$. Multiplying by $\emt$
and integrating, we find that
\[
\int_0^\infty \emt \gge\bigl(\psi(t)\bigr)\,dt \leq \gge(w_0) +
C\eps.
\]
Moreover, if $\kappa=1$, since $\psi'=\eps w_1$ and $w_1\in H$, from \eqref{eq:strutD} we see that
\[
\ksue \int_0^\infty \emt \diss(\psi'(t))\,dt= \frac {\eps}2
B(w_1,w_1)
\int_0^\infty \emt
\,dt \leq C\eps.
\]
Summing up, $\jeps(\psi)\leq \gge(w_0)+C\eps$: in particular,
$\jeps$ has a finite infimum and \eqref{eq:stimalivellosharp}
follows as soon as $\jeps$ has an absolute minimizer $u_\eps$.  To
show this, consider a minimizing sequence $u_k$ and fix $T>0$.
Combining the estimate
\[
\int_0^T \Vert u''_k(t)\Vert_{L^2}^2\,dt\leq e^T \int_0^T
\emt \Vert u''_k(t)\Vert_{L^2}^2\,dt\leq 2\eps ^2 e^T \jeps(u_k)
\]
with the initial conditions \eqref{eq:condiniz} satisfied by $u_k$,
we see that $\{u_k\}$ is bounded in $H^2_{\text{loc}}([0,\infty);L^2)$
whence, up to subsequences, $u_k(t)\rightharpoonup u(t)$
and $u'_k(t)\rightharpoonup u'(t)$ in $L^2$
for every $t\geq 0$, for some $u\in H^2_{\text{loc}}([0,\infty);L^2)$
that fulfills  \eqref{eq:condiniz}. Now the term involving $u''$ in
\eqref{eq:defjeps} is lower semicontinuous, and the same is
true of the other two terms by Fatou's Lemma and weak convergence
in $L^2$ of $u_k(t)$ and $u_k'(t)$ for fixed $t$, using \eqref{lsci} and
\eqref{lsci2}. This shows that $\jeps(u)\leq \liminf \jeps(u_k)$,
hence $u= u_\eps$ is a global minimizer.
\end{proof}

In some cases, a weaker version of \eqref{eq:stimalivellosharp} will be used, namely
\begin{equation}
\label{eq:stimalivello}
\jeps(u_\eps)\leq C.
\end{equation}

\begin{remark}
To simplify notation, given a minimizer $u_\eps$, we define, for $ t\ge 0$,
\begin{equation}
\label{short}
\Weps(t) := \gge\bigl(u_\eps(t, \cdot)\bigr)\quad\text{and}\quad \Heps(t):=\diss\bigl(u_\eps'(t, \cdot)\bigr).
\end{equation}
We also set
\begin{equation}
\label{eq:defD}
D_\eps(t):=\frac1{2\eps^2} \spint |u_\eps''(t,x)|^2\,dx\quad\text{for a.e. $t>0$,}
\end{equation}
so that we write
\begin{equation}
\label{eq:defL}
L_\eps(t):=D_\eps(t)+\Weps(t) + \ksue \Heps(t)
\end{equation}
for the locally integrable ``Lagrangian''.
Finally we introduce the kinetic energy function
\[
K_\eps(t):=\frac 1 {2\eps^2} \spint |u_\eps'(t,x)|^2\,dx\quad \forall t\geq 0.
\]
The notation just introduced will be used systematically in the sequel.
\end{remark}

Note that, due to Lemma \ref{lemmasommabilita} below,
 $K_\eps\in W^{1,1}(0,T)$ for all $T>0$ and
\begin{equation}
  \label{eq:defKprime}
  K_\eps'(t)=\frac 1 {\eps^2}\spint u_\eps'(t,x)u_\eps''(t,x)\,dx\quad\text{for a.e. $t>0$.}
\end{equation}

\begin{lemma}\label{lemmasommabilita}
The minimizers
$u_\eps$ defined by Lemma \ref{existence2} satisfy
\begin{align}
  \label{eq:5}
\int_0^\infty \emt D_\eps(t) \,dt &=  \int_0^\infty \emt\int
\frac{|u_\eps''|^2}{2\eps^2}\de\leq C,\\
  \label{eq:6}
 \int_0^\infty \emt K_\eps(t)\,dt &=   \int_0^\infty \emt\int
 \frac{|u_\eps'|^2}{2\eps^2}\de\leq
 C.
\end{align}
\end{lemma}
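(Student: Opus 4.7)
The plan splits into two parts. Inequality \eqref{eq:5} is immediate from the level bound \eqref{eq:stimalivellosharp}: since $\gge\ge 0$ and $\diss\ge 0$, the very definition \eqref{eq:defjeps} of $\jeps$ gives
\[
\int_0^\infty \emt D_\eps(t)\,dt \le \jeps(u_\eps) \le C,
\]
which is the content of \eqref{eq:5}. No additional work is needed.

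For \eqref{eq:6} the idea is to reduce to \eqref{eq:5} by a weighted integration by parts in time, followed by an absorption argument. Since $u_\eps\in H^2\loc([0,\infty);L^2)$, the scalar function $t\mapsto \|u_\eps'(t)\|_{L^2}^2$ is locally absolutely continuous on $[0,\infty)$, with a.e.\ derivative $2\int u_\eps'(t,x)\,u_\eps''(t,x)\,dx$. Differentiating $\emt\|u_\eps'(t)\|_{L^2}^2$, integrating over $[0,T]$, and rearranging, I would obtain
\[
\int_0^T \emt \|u_\eps'(t)\|_{L^2}^2\,dt
= \|u_\eps'(0)\|_{L^2}^2 - e^{-T}\|u_\eps'(T)\|_{L^2}^2 + 2\int_0^T \emt \int u_\eps'\,u_\eps''\,dx\,dt.
\]
Discarding the nonpositive boundary term $-e^{-T}\|u_\eps'(T)\|_{L^2}^2$ and applying the weighted Cauchy--Schwarz inequality to the cross term, I would arrive at
\[
A_T \le \|u_\eps'(0)\|_{L^2}^2 + 2\sqrt{A_T\,B_T},
\]
where $A_T:=\int_0^T\emt\|u_\eps'(t)\|_{L^2}^2\,dt$ and $B_T:=\int_0^T\emt\|u_\eps''(t)\|_{L^2}^2\,dt$.

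To close, I would insert the boundary condition $u_\eps'(0)=\eps w_1$ from \eqref{eq:condiniz}, giving $\|u_\eps'(0)\|_{L^2}^2=\eps^2\|w_1\|_{L^2}^2$, and bound $B_T\le 2\eps^2 C$ via \eqref{eq:5}. The resulting quadratic inequality in $\sqrt{A_T}$ has coefficients of orders $\eps^2$ and $\eps$, and yields $A_T \le C'\eps^2$ uniformly in $T$; dividing by $2\eps^2$ and passing to the limit $T\to\infty$ by monotone convergence then gives \eqref{eq:6}. The only point that might look delicate---the behaviour of $\emt\|u_\eps'(T)\|_{L^2}^2$ as $T\to\infty$---is sidestepped entirely by working on finite intervals, since that boundary term enters with the favourable sign and can simply be thrown away.
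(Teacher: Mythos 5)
Your proposal is correct and follows essentially the same route as the paper: part \eqref{eq:5} is read off from the level bound, and \eqref{eq:6} is reduced to \eqref{eq:5} plus the boundary condition $u_\eps'(0)=\eps w_1$ by controlling the weighted $L^2$ norm of $u_\eps'$ in terms of that of $u_\eps''$. The only difference is that the paper invokes the ready-made inequality $\INTTO\infty \emt |v|^2\,dxdt\le 2\spint|v(0)|^2\,dx+4\INTTO\infty\emt|v'|^2\,dxdt$ from \cite{Noi}, whereas you rederive it (with equivalent constants) via the weighted integration by parts and the absorption of the cross term.
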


\begin{proof}
Estimate \eqref{eq:5} follows immediately from \eqref{eq:stimalivello}. The inequality (see \cite{Noi})
\[
\INTTO\infty \emt |v(t,x)|^2\,dxdt
\leq
2\spint |v(0,x)|^2\,dx+
4\INTTO\infty \emt |v'(t,x)|^2\,dxdt,
\]
applied with $v(t,x)=u_\eps'(t,x)$,  shows, using \eqref{eq:condiniz} and \eqref{eq:5}, that
\[
  \IT\infty \emt |u_\eps'|^2\de\leq 2\eps^2\spint| w_1(x)|^2\,dx+C\eps^2
\]
and \eqref{eq:6} is established since $w_1\in L^2$ by \eqref{densembed}.
\end{proof}

\section{The approximate energy}
Since integrals with an exponential weight play a major role in our investigation,
it is convenient to introduce the following \emph{average operator}.
\begin{definition}
If $f:\R^+\to [0,+\infty]$ is measurable, we let
\[
\ope f\,(s):=
\int_s^\infty e^{-(t-s)}f(t)\,dt,
\quad s\geq 0.
\]
\end{definition}
Note that $\ope f$ is well defined (possibly $+\infty$) as $f\geq 0$. However, since
\begin{equation}
\label{opeinzero}
\ope f\,(0)=\int_0^\infty \emt f(t)\,dt,
\end{equation}
if $\ope f\,(0)<\infty$ then
$\ope f$ is absolutely continuous on intervals $[0,T]$, and
\begin{equation}
\label{magia1}
(\ope f)'=\ope f-f.
\end{equation}
In any case, since $\ope f\geq 0$, starting from $f\geq 0$ one can iterate
$\ope$, and a simple computation gives
\begin{equation}
\label{opeopeinzero}
\ope^2\! f\,(s)=
\int_s^\infty e^{-(t-s)}(t-s)f(t)\,dt
\end{equation}
and, in particular,
\begin{equation}
\label{Aquadro}
\ope^2\! f\,(0)=
\int_0^\infty e^{-t}tf(t)\,dt.
\end{equation}

We now introduce a fundamental quantity for our approach.

\begin{definition}
Let $u_\eps$ be a minimizer of $\jeps$. The \emph{approximate energy} is the function
\begin{equation}
\label{defen}
\FF:=K_\eps+\ope^2 \Weps
\end{equation}
or, more explicitly,
\begin{equation}
\label{defFF} \FF(s)=K_\eps(s)+\int_s^\infty e^{-(t-s)}(t-s)
\gge(u_\eps(t))
\,dt,\quad s\geq 0.
\end{equation}
\end{definition}

\begin{remark}
In \eqref{defFF}, the kinetic energy $K_\eps$ is evaluated pointwise at
time $s$,
while the potential energy $\Weps$ is \emph{averaged}
over times $t\geq s$ via the \emph{probability kernel} $e^{-(t-s)}(t-s)$. 
However, recalling the time scaling $t\mapsto \eps t$
that links the functionals $\feps$ and $\jeps$,
in the original time scale the probability kernel in \eqref{defFF} concentrates
close to $s$ as $\eps\to 0$. Thus, heuristically, from \eqref{eq:uv} one expects that
$\FF(t/\eps)\approx {\mathcal E}(t)$ where $\mathcal E$ is the physical energy
defined in \eqref{eq:energiaonde}.
\end{remark}

Observe that, from \eqref{eq:defL} and \eqref{opeinzero}, we have
\[
\ope \Weps\,(0)\leq
\ope L_\eps\,(0)=\jeps(u_\eps)\leq C
\]
and so $\ope\Weps$ is well defined.
But since $\ope$ is iterated twice in \eqref{defen}, it is not even clear
why $\FF(s)$ should be finite. In fact, as we will show, $\FF(s)$ is finite
and \emph{decreasing}, and this monotonicity will be the key to our estimates.

The monotonicity of $\FF$ will be deduced from the following proposition.

\begin{proposition}
Let $u_\eps$ be a minimizer of $\jeps$. For every $g\in C^2([0,+\infty))$ such that $g(0) = 0$
and $g(t)$ is constant for large $t$, there results
\begin{equation}
\begin{split}
\label{energyident}
&\int_0^\infty \ems(g'(s)-g(s))L_\eps(s)\,ds
\\ &- \int_0^\infty \ems \bigl(4D_\eps(s)g'(s) +K_\eps'(s)g''(s)
+ \frac{2\kappa}{\eps}\Heps(s) g'(s)\bigr)\, ds
=
g'(0)R(u_\eps),
\end{split}
\end{equation}
where
\begin{equation}
\label{defRu}
R(u_\eps)=
- \eps \int_0^\infty \ems s \langle \nabla \gge(u_\eps(s)), w_1 \rangle\,ds
- \kappa \int_0^\infty \ems \langle \nabla \diss(u_\eps'(s)), w_1 \rangle\,ds.
\end{equation}
The quantity $R(u_\eps)$ is finite, and satisfies the estimate
\begin{equation}
\label{estRu}
|R(u_\eps)| \le C(\eps + \kappa \sqrt\eps) \le C\sqrt\eps.
\end{equation}
\end{proposition}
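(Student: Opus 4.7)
The plan is to derive \eqref{energyident} as the first-order optimality condition for $u_\eps$ along a carefully chosen one-parameter family of admissible competitors. I use the time reparametrization $u_\delta(t,x):=u_\eps(t+\delta g(t),x)$: since $g(0)=0$ one has $u_\delta(0)=w_0$, but $u_\delta'(0)=(1+\delta g'(0))\eps w_1$ violates \eqref{eq:condiniz} unless $g'(0)=0$, so I correct it by setting
\[
\tilde u_\delta(t,x):=u_\eps(t+\delta g(t),x)-\delta\eps g'(0)\,t\,w_1(x),
\]
which satisfies \eqref{eq:condiniz} for every small $\delta$. Admissibility of $\tilde u_\delta$ (namely $\jeps(\tilde u_\delta)<\infty$) is checked using the growth bound \eqref{growth} for $\gge$, the quadratic character of $\diss$ with $w_1\in H$ when $\kappa=1$, and a change-of-variables estimate for the kinetic term; minimality of $u_\eps$ then yields $\frac{d}{d\delta}\jeps(\tilde u_\delta)\big|_{\delta=0}=0$.

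To compute $\frac{d}{d\delta}\jeps(u_\delta)\big|_{\delta=0}$ without invoking third-order derivatives of $u_\eps$, I perform in each integral of $\jeps(u_\delta)$ the substitution $s=t+\delta g(t)$ (a $C^2$-diffeomorphism of $[0,\infty)$ for $|\delta|$ small, with $\partial_\delta\phi_\delta^{-1}(s)\big|_{\delta=0}=-g(s)$). All $\delta$-dependence then sits in smooth Jacobian and exponential-shift factors. Expanding $|u_\delta''|^2=|\delta g''u_\eps'+(1+\delta g')^2 u_\eps''|^2$ and using the quadratic homogeneity $\diss((1+\delta g')u_\eps')=(1+\delta g')^2\Heps$, differentiation under the integral at $\delta=0$ produces
\[
\frac{d}{d\delta}\jeps(u_\delta)\Big|_{\delta=0}=\int_0^\infty\ems\Bigl[(g+3g')D_\eps+g''K_\eps'+(g-g')\Weps+\ksue(g+g')\Heps\Bigr]\,ds.
\]
The algebraic identity $3g'D_\eps-g'\Weps+\ksue g'\Heps=-g'L_\eps+4g'D_\eps+\frac{2\kappa}{\eps}g'\Heps$, combined with $L_\eps=D_\eps+\Weps+\ksue\Heps$, then recasts the right side as minus the left-hand side of \eqref{energyident}.

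For the correction contribution, linearity in $\delta$ gives $-\eps g'(0)\langle\nabla\jeps(u_\eps),t\,w_1\rangle$; since $(tw_1)''=0$ and $(tw_1)'=w_1$, this Gâteaux derivative reduces to
\[
\int_0^\infty\ems\Bigl[s\,\langle\nabla\gge(u_\eps(s)),w_1\rangle+\ksue\,\langle\nabla\diss(u_\eps'(s)),w_1\rangle\Bigr]\,ds,
\]
so $-\eps$ times it is exactly the $R(u_\eps)$ of \eqref{defRu}, and equating the total to zero yields \eqref{energyident}. Finally, to prove \eqref{estRu} I bound the first summand of $R(u_\eps)$ via \eqref{stimagattheta}, namely $|\langle\nabla\gge(u_\eps),w_1\rangle|\leq C(1+\Weps^\theta)\|w_1\|_W$, then apply Hölder with exponents $1/\theta$ and $1/(1-\theta)$ to $\int\ems s\,\Weps^\theta\,ds$ using $\int\ems\Weps\,ds\leq\jeps(u_\eps)\leq C$ from \eqref{eq:stimalivello}; this contributes $O(\eps)$. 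The second summand is handled by \eqref{gradDDE} and Cauchy--Schwarz together with $\int\ems\Heps\,ds\leq C\eps$ (which follows from $\ksue\int\ems\Heps\,ds\leq\jeps(u_\eps)\leq C$), giving $O(\kappa\sqrt\eps)$. The delicate step throughout is the change of variables $s=t+\delta g(t)$: without it one would be confronted with $u_\eps'''$, which has no a priori meaning under (H1) alone.
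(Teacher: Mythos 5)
Your proposal is correct and follows essentially the same route as the paper: an inner time-reparametrization competitor corrected by a term proportional to $t\,w_1$ to restore the condition $u'(0)=\eps w_1$, a change of variables $s=\phi_\delta(t)$ to shift all $\delta$-dependence onto smooth Jacobian/exponential factors (thereby avoiding $u_\eps'''$), and the same bounds via \eqref{stimagattheta}, \eqref{gradDDE} and $\jeps(u_\eps)\le C$ for \eqref{estRu}. The only differences are cosmetic (opposite sign convention for $\delta$, splitting the derivative into reparametrization and correction parts rather than computing them in one integrand, and H\"older in place of Young), and your intermediate formula for $\frac{d}{d\delta}\jeps(u_\delta)\big|_{\delta=0}$ checks out against \eqref{energyident}.
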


\begin{proof}
For every $\delta\in\R$ with $|\delta|$ small enough, the function
\begin{equation}
  \label{diffeo}
  \varphi(t)=\varphi(t,\delta)=t -\delta g(t)
\end{equation}
is a diffeomorphism of $\Rpiu$ of class $C^2$. We denote by $\psi$ its inverse,
\[
\psi(s)=\varphi^{-1}(s),\quad s\geq 0
\]
(the dependence on $\delta$, which is fixed, is omitted to
simplify the notation).
For small  $\delta$, we consider the competitor
\[
U(t)=u_\eps(\varphi(t)) + t\delta\eps g'(0) w_1,
\]
which satisfies the boundary conditions $U(0)=w_0$ and $U'(0)=\eps w_1$,
because $\varphi(0)=0$ and $\varphi'(0) = 1-\delta g'(0)$.  We have
\begin{align*}
&U'(t)=u_\eps'(\varphi(t))\varphi'(t)+ \delta\eps g'(0)w_1 ,\\
&U''(t)=u_\eps''(\varphi(t))|\varphi'(t)|^2+u_\eps'(\varphi(t))\varphi''(t)
\end{align*}
and hence
\begin{align*}
\jeps(U)&= \int_0^\infty \emt \left\{
\frac 1 {2\eps^2} \left\Vert
u_\eps''(\varphi(t))|\varphi'(t)|^2+u_\eps'(\varphi(t))\varphi''(t)
\right\Vert_{L^2}^2   \right.
\\
&+\gge\bigl(u_\eps(\varphi(t))  + t\delta\eps g'(0)w_1   \bigr)
+
\ksue
\diss\bigl(u_\eps'(\varphi(t))\varphi'(t)+ \delta\eps g'(0)w_1 \bigr)
 \Biggr\}\,dt.
\end{align*}
Changing variable in the integral letting $t=\psi(s)$, that is,
$s=\varphi(t)$, we have
\begin{equation}
\label{graffa}
\begin{split}
\jeps(U)&= \int_0^\infty \psi'(s)\emgs\left\{
 \frac 1 {2\eps^2} \left\Vert
u_\eps''(s)|\varphi'(\psi(s))|^2+u_\eps'(s)\varphi''( \psi(s))
\right\Vert_{L^2}^2
\right.
\\
+& \gge\bigl(u_\eps(s)  + \delta\eps g'(0)w_1 \psi(s)\bigr)
+
\ksue
\diss\bigl(u_\eps'(s)\varphi'(\psi(s)) +\delta\eps g'(0)w_1\bigr)
 \Biggr\}\,ds.
\end{split}
\end{equation}
 Note that, from \eqref{diffeo},
 $s=\varphi(\psi(s))=\psi(s)-\delta g(\psi(s))$, that is,
 \begin{equation}
   \label{eq:psi}
   \psi(s)=s+\delta g(\psi(s)).
 \end{equation}
In view of the assumptions on $g$, we have $\psi(s)\geq s-\delta\Vert
g\Vert_\infty$ and hence $ e^{-\psi(s)}\leq e^{\delta\Vert
g\Vert_\infty}\ems $. Furthemore, by \eqref{growth} and \eqref{eq:strutD},
\[
\gge\bigl(u_\eps(s)  + \delta\eps g'(0)w_1 \psi(s)\bigr) \le C\bigl(1 + \gge(u_\eps(s))+ \psi(s)^{\frac{1}{1-\theta}}\bigr)
\]
and
\[
\diss\bigl(u_\eps'(s)\varphi'(\psi(s)) +\delta\eps g'(0)w_1\bigr) \le 2 \varphi'(\psi(s))^2\diss(u_\eps'(s)) + C\diss(w_1).
\]
These inequalities, together with \eqref{eq:5},
\eqref{eq:6} and the finiteness of $\Vert\varphi'\Vert_\infty$ and
$\Vert\varphi''\Vert_\infty$, show that $\jeps(U)$ is finite and
hence $U$ is an admissible competitor.

Since $U(t)$ reduces to $u_\eps(t)$ when $\delta=0$, the
minimality of $u_\eps$ implies that
\begin{equation}
  \label{eq:derzero}
  \frac d{d\delta}\jeps(U)\bigl\vert_{\delta=0}\bigr. =0.
\end{equation}
In order to compute this derivative, we differentiate under the
integral sign in \eqref{graffa} (reasoning as above for the
finiteness of $\jeps(U)$, it is easy to prove that this is
possible). From \eqref{eq:psi},
\[
  \dedeltaz {\bigl(\psi'(s)\emgs\bigr)} =g(s)\ems-g'(s)\ems.
\]
Moreover, elementary computations give
\[
\dedeltaz {|\varphi'(\psi(s))|^2}=-2g'(s),\qquad
 \dedeltaz {\varphi''(\psi(s))}=-g''(s).
\]
Denoting by $\Theta(s)$ the function within braces under the
integral sign in \eqref{graffa}, and recalling \eqref{eq:defL}, there hold
\[
\Theta(s)\bigl\vert_{\delta=0}\bigr. =
\frac 1 {2\eps^2} \Vert u_\eps''(s)\Vert_{L^2}^2
+
\Weps(s)
+
\ksue\Heps(s)=L_\eps(s)
\]
and, recalling \eqref{eq:defD} and \eqref{eq:defKprime},
\begin{align*}
\dedeltaz{\Theta(s)} =  &-
 \frac 1 {\eps^2}
\left\langle u_\eps''(s), 2 u_\eps''(s)g'(s)+u_\eps'(s) g''(s)\right\rangle_{L^2}
-\frac {2\kappa}\eps g'(s)       \Heps(s)\\
&+ \eps g'(0) s\langle \nabla \gge(u_\eps(s)),w_1\rangle + \kappa g'(0)\langle\nabla \diss(u_\eps'(s)), w_1\rangle \\
=&-4D_\eps(s)g'(s)-K_\eps'(s)g''(s)-\frac {2\kappa}\eps g'(s)     \Heps(s)  \\
&+ \eps g'(0) s\langle \nabla \gge(u_\eps(s)),w_1\rangle + \kappa g'(0)\langle\nabla \diss(u_\eps'(s)), w_1\rangle.
\end{align*}
Combining these facts, we obtain that
\begin{align*}
\frac{\partial}{\partial \delta} &{\bigl( \psi'(s)\emgs \Theta(s)\bigr)}\bigl\vert_{\delta=0}\bigr.
= \ems \bigl(g'(s)-g(s)\bigr)
L_\eps(s)
\\
&-\ems
\Bigl( 4D_\eps(s)g'(s)+K_\eps'(s)g''(s)+\frac {2\kappa}\eps g'(s)       \Heps(s)
\Bigr)\\
& + \ems \bigl(\eps g'(0) s\langle \nabla \gge(u_\eps(s)),w_1\rangle + \kappa g'(0)\langle\nabla \diss(u_\eps'(s)), w_1\rangle\bigr).
\end{align*}
Finally, integrating in $s$ we see that \eqref{eq:derzero} reduces to \eqref{energyident}.

We now prove estimate \eqref{estRu}.
For the first integral in \eqref{defRu}, we have from \eqref{stimagattheta}
and Young inequality
\[
\begin{split}
&\left\vert
\int_0^\infty \ems s \langle \nabla \gge(u_\eps(s)), w_1 \rangle\,ds
\right\vert
\leq
\Vert w_1\Vert_W
\int_0^\infty \ems s \Vert \nabla \gge(u_\eps(s))\Vert_{W'}\,ds
\\
&\leq C
\int_0^\infty \ems s \bigl(1+ \Weps(s)^\theta\bigr)\,ds
=
C+
C
\int_0^\infty \ems s \Weps(s)^\theta\,ds
\\
&\leq C+
C\int_0^\infty \ems s^{1/(1-\theta)}\,ds+
\int_0^\infty \ems \Weps(s)\,ds
\leq C+\jeps(u_\eps)\leq C
\end{split}
\]
having used \eqref{eq:stimalivello}, and thus $|R(u_\eps)|\leq C\eps$ when $\kappa=0$.
If, on the other hand, $\kappa=1$, we also estimate the second integral in \eqref{defRu}:
\begin{align*}
&\left\vert
\int_0^\infty \ems \langle \nabla \diss(u_\eps'(s)), w_1 \rangle\,ds
\right\vert
\leq
\Vert w_1\Vert_H
\int_0^\infty \ems \Vert \nabla \diss(u_\eps'(s))\Vert_{H'}\,ds
\\
\leq&
C
\int_0^\infty \ems \sqrt{ \Heps(s)}\,ds
\leq
C\left(\int_0^\infty \ems \Heps(s)\,ds\right)^{1/2}
\leq C\left(\eps\jeps(u_\eps)\right)^{1/2}
\leq C\sqrt{\eps},
\end{align*}
having used \eqref{gradDDE}, Jensen inequality and \eqref{eq:stimalivello}.
\end{proof}

\begin{corollary}\label{prop42}
If $g\geq 0$ is of class $C^{1,1}$, satisfies $g(0)=0$ and is affine for large $t$, then
\eqref{energyident} remains true (all integrals being finite). In particular,
when $g(t)=t$, we obtain
\begin{equation}
\label{formulat}
\ope^2 L_\eps\,(0)
+\frac{2\kappa}\eps \ope \Heps\,(0)
+4\ope D_\eps\,(0)
= \ope L_\eps\,(0)
- R(u_\eps).
\end{equation}
\end{corollary}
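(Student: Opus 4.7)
I would prove the corollary by approximating $g$ by a sequence of admissible test functions for the proposition (that is, $g_n \in C^2$ with $g_n(0)=0$ and $g_n$ constant for large $t$) and taking the limit in \eqref{energyident}. A bootstrapping issue arises: dominated convergence for the integrand $e^{-s}(g'-g)L_\eps$ requires $\ope^2 L_\eps(0) < \infty$, which is not yet available. I would therefore dispatch $g(t)=t$ first by a monotone convergence argument; this simultaneously proves \eqref{formulat} and extracts the needed integrability.

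For $g(t)=t$, I would choose $g_M \in C^2$ with $g_M(0)=0$, $g_M=t$ on $[0,M]$, and $g_M$ constant on $[M+1,\infty)$, arranged so that $g_M \nearrow t$ and $g_M' \nearrow 1$ monotonically in $M$, with $0\le g_M'\le 1$ and $|g_M''|\le C$ supported in $[M,M+1]$ (a concave transition on $[M,M+1]$ suffices). Since $g_M'(0)=1$, the proposition applied to $g_M$ and rearranged reads
\begin{multline*}
\int_0^\infty e^{-s}g_M L_\eps\,ds + 4\int_0^\infty e^{-s}D_\eps g_M'\,ds + \frac{2\kappa}{\eps}\int_0^\infty e^{-s}\Heps g_M'\,ds \\
= \int_0^\infty e^{-s}g_M' L_\eps\,ds - \int_0^\infty e^{-s}K_\eps' g_M''\,ds - R(u_\eps).
\end{multline*}
Monotone convergence on the left yields $\ope^2 L_\eps(0) + 4\ope D_\eps(0) + \frac{2\kappa}{\eps}\ope\Heps(0)$, while on the right the first term tends to $\ope L_\eps(0)$. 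The $K_\eps'$-term is bounded by $C\int_M^{M+1}e^{-s}|K_\eps'|\,ds$, which vanishes as $M\to\infty$ because $|K_\eps'(t)|\le 2\sqrt{K_\eps(t)D_\eps(t)}$ (Cauchy--Schwarz applied to \eqref{eq:defKprime}) combined with Lemma~\ref{lemmasommabilita} gives $\int_0^\infty e^{-s}|K_\eps'|\,ds < \infty$. This proves \eqref{formulat}, with the byproduct $\ope^2 L_\eps(0)<\infty$.

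For general $g \in C^{1,1}$ with $g(0)=0$, $g\ge 0$, and $g$ affine for large $t$, the estimates $|g(t)|\le C(1+t)$ and $|g'|,|g''|\le C$ together with the just-established $\ope^2 L_\eps(0)<\infty$ dominate every integrand in \eqref{energyident} by an integrable function, which settles the finiteness claim. I would then approximate $g$ by a sequence $g_n \in C^2$ with $g_n(0)=0$ and $g_n$ eventually constant, constructed by mollifying $g$ on a shrinking scale and smoothly capping its affine tail at $t=n$, chosen so that $g_n\to g$, $g_n'\to g'$, $g_n''\to g''$ pointwise a.e.\ with $|g_n(t)|\le C(1+t)$ and uniform $L^\infty$-bounds on $g_n',g_n''$. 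Applying the proposition to each $g_n$ and invoking dominated convergence yields \eqref{energyident} for $g$. The main obstacle, as flagged above, is exactly the bootstrap ordering; the approximation steps themselves are routine.
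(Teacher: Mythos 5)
Your proof is correct and follows essentially the same route as the paper: approximate $g$ by $C^2$, eventually constant functions, use monotone convergence on the term $\int_0^\infty e^{-s}g(s)L_\eps(s)\,ds$ (the only one with an unbounded weight) and dominated convergence on the remaining terms, the latter justified by $e^{-t}L_\eps,\ e^{-t}D_\eps,\ e^{-t}|K_\eps'|,\ e^{-t}\Heps\in L^1(\Rpiu)$. The only difference is organizational: the paper runs the argument once for general $g\geq 0$, deducing the finiteness of $\ope^2 L_\eps\,(0)$ from the finiteness of all the other terms in the resulting identity, whereas you first settle $g(t)=t$ to extract $\ope^2 L_\eps\,(0)<\infty$ and then treat general $g$ by pure dominated convergence.
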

\begin{remark}\label{remFfinita} Since $L_\eps(t)\geq \Weps(t)$, the finiteness of
$\ope^2L_\eps\,(0)$ in
\eqref{formulat} entails that the approximate energy $\FF(s)$ is finite for \emph{every}
$s\geq 0$ (in fact, it is absolutely continuous on intervals $[0,T]$, see the discussion after \eqref{opeinzero}).
\end{remark}

\begin{proof}
By smoothing a truncation of $g$,
one can find
 an increasing sequence $g_k$ of $C^2$ functions,
each eventually constant,
 such that as $k\to\infty$
\[
g_k(t) \uparrow g(t),\quad g_k'(t) \uparrow g'(t),\quad g_k''(t) \to g''(t)\qquad
\text{pointwise for every $t\geq 0$,}
\]
with $g'_k$ and $g''_k$ uniformly bounded.
We now write  \eqref{energyident} for $g_k$ and let $k \to \infty$. Since the functions
\[
\emt L_\eps(t), \quad \emt D_\eps(t),\quad \emt |K_\eps'(t)|,\quad \emt \Heps(t)
\]
are all in $L^1(\Rpiu)$ (either by the finiteness of $\jeps(u_\eps)$ or by Lemma \ref{lemmasommabilita}) and
$g_k'(0)R(u_\eps)$ does not depend on $k$,
all integrals pass to the limit, except for
the integral of $\emt g_k(t)L_\eps(t)$ because the $g_k$ are not uniformly bounded.
For this term, however, one can use \emph{monotone} convergence,
and the integral of
$\emt g(t)L_\eps(t)$ in the limit is finite, by finiteness of all other terms.
In particular,
one can let  $g(t)= t$ in \eqref{energyident}, which (recalling
\eqref{opeinzero} and \eqref{Aquadro}) yields \eqref{formulat}.
\end{proof}

\begin{corollary}
For almost every $T>0$ there results
\begin{equation}\label{formulatmenot}
\ope^2L_\eps\,(T)
-\ope L_\eps\,(T)
+K_\eps'(T)=-4\ope D_\eps\,(T)
-\frac{2\kappa}{\eps}\ope\Heps\,(T).
\end{equation}
\end{corollary}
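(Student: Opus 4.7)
The plan is to specialize the general identity \eqref{energyident} to a family of $C^{1,1}$ test functions whose second derivatives concentrate at the point $T$, and then pass to the limit. For each $h>0$ I would use
\[
g_h(t)=\begin{cases}
0 & 0\le t\le T,\\
(t-T)^2/(2h) & T\le t\le T+h,\\
(t-T)-h/2 & t\ge T+h.
\end{cases}
\]
Then $g_h\geq 0$, $g_h(0)=0$, $g_h$ is affine for $t\geq T+h$, and $g_h\in C^{1,1}$ with $g_h''=(1/h)\mathbf 1_{[T,T+h]}$ almost everywhere. The crucial feature is that $g_h'(0)=0$ as soon as $T>0$, so the boundary term $g'(0)R(u_\eps)$ in \eqref{energyident} vanishes; Corollary~\ref{prop42} therefore gives
\[
\int_0^\infty \ems g_h'(s) L_\eps(s)\,ds-\int_0^\infty \ems g_h(s) L_\eps(s)\,ds=\int_0^\infty \ems\bigl(4D_\eps g_h' + K_\eps' g_h''+\tfrac{2\kappa}{\eps}\Heps g_h'\bigr)\,ds.
\]

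Next I would pass to the limit $h\to 0^+$. The four terms containing $g_h'$ are all handled by dominated convergence: $0\leq g_h'\leq 1$ and $g_h'\to \mathbf 1_{(T,\infty)}$ pointwise, while the envelopes $\ems L_\eps$, $\ems D_\eps$, $\ems\Heps$ all lie in $L^1(\Rpiu)$ by Lemma~\ref{lemmasommabilita} together with \eqref{eq:stimalivello}. Writing $\ems = e^{-T}e^{-(s-T)}$ and invoking the definition of $\ope$, these three limits deliver $e^{-T}\ope L_\eps(T)$, $e^{-T}\ope D_\eps(T)$ and $e^{-T}\ope\Heps(T)$. The term involving $g_h$ itself is analogous: $g_h(s)\uparrow(s-T)_+$ pointwise, so monotone convergence combined with the finiteness of $\ope^2 L_\eps(T)$ granted by Remark~\ref{remFfinita} yields the limit $e^{-T}\ope^2 L_\eps(T)$.

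The only non--routine limit, and the one that forces the almost--everywhere qualification, is the one involving $g_h''$. Since $g_h''(s)=(1/h)\mathbf 1_{[T,T+h]}(s)$ a.e.,
\[
\int_0^\infty \ems K_\eps'(s)\,g_h''(s)\,ds=\frac{1}{h}\int_T^{T+h}\ems K_\eps'(s)\,ds,
\]
and since $K_\eps\in W^{1,1}\loc(\Rpiu)$ (as recalled right after \eqref{eq:defKprime}) the function $\ems K_\eps'$ is in $L^1\loc(\Rpiu)$, so the Lebesgue differentiation theorem gives that this quantity converges to $e^{-T}K_\eps'(T)$ for almost every $T>0$. Collecting the five limits, dividing by $e^{-T}$ and rearranging terms yields exactly \eqref{formulatmenot}. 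The hard part is precisely this last step: because $K_\eps'$ is only locally integrable there is no pointwise control, which is what restricts the conclusion to a.e. $T$; all the remaining passages to the limit are routine instances of dominated or monotone convergence.
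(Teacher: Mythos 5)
Your proof is correct and follows essentially the same route as the paper: the same rescaled quadratic-transition test function (your $g_h$ is exactly the paper's $g_\delta(t)=\delta g((t-T)/\delta)$), the same appeal to Corollary~\ref{prop42}, dominated/monotone convergence for the $g_h$ and $g_h'$ terms, and the Lebesgue differentiation theorem for the $g_h''$ term, which is indeed the source of the a.e.\ restriction. Your treatment is if anything slightly more explicit than the paper's about why $g_h'(0)=0$ kills the $R(u_\eps)$ term and where the a.e.\ qualification comes from.
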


\begin{proof}
Consider the function  $g \in C^{1,1}(\R)$ defined as
\[
g(t) = \begin{cases}
0 & \text{if}\quad t \le 0 \\
t^2/2 & \text{if }\quad t\in (0,1) \\
t-1/2 & \text{if}\quad t \ge 1
\end{cases}
\]
and, for   $T>0$ and $\delta >0$ (we will
let $\delta \downarrow 0$), set
\begin{equation}
\label{funzg}
g_\delta(t) = \delta g((t-T)/\delta).
\end{equation}
Each $g_\delta$ satisfies the assumptions of Corollary~\ref{prop42},
and $g_\delta''(t) = \frac1{\delta}\chi_{(T, T+\delta)}$.
Letting $g=g_\delta$ in \eqref{energyident} and
rearranging terms, gives
\begin{align*}
\label{energyident}
&\int_T^\infty \emt(g_\delta(t)- g'_\delta(t))L_\eps(t)\,dt
+ \frac{1}{\delta}\int_T^{T+\delta}\emt K_\eps'(t)\,dt \\
=&- \int_T^\infty \emt \bigl(4D_\eps(t) g_\delta'(t)
+\frac{2\kappa}{\eps}\Heps(t)g_\delta'(t)\bigr)\, dt.
\end{align*}
Note that, as $\delta \to 0$, $g_\delta(t) \to (t-T)^+$ while $g_\delta'(t) \to \chi_{(T,\infty)}$,
with bounds
$|g_\delta(t)|\leq  (t-T)^+$ and $|g'_\delta(t)|\leq  1$.
By dominated convergence we can let $\delta \downarrow 0$, thus obtaining for a.e. $T$
\begin{align*}
&\int_T^\infty\emt  (t-T)L_\eps(t)\,dt
-\int_T^\infty \emt L_\eps(t)\,dt
+ e^{-T} K_\eps'(T)\\
=&-\int_T^\infty \emt \bigl(4D_\eps(t) +\frac{2\kappa}\eps \Heps(t)\bigr)\,dt,
\end{align*}
and multiplying by $e^T$ one
obtains \eqref{formulatmenot}.
\end{proof}

\begin{theorem}
The function $\FF$ is finite and decreasing. More precisely,
\begin{equation}
\label{FFprimo} \FF'(T)\leq
-\,\frac{\,\kappa\,}{\eps}\left(
\ope
\Heps\,(T)
+
\ope^2
\Heps\,(T)\right),
\end{equation}
and
\begin{equation}
\label{stimaFT}
\FF(T)+\frac{2\kappa}{\eps}\int_0^T
\Heps(t)\,dt
\leq
\frac 1 2 \Vert w_1\Vert_{L^2}^2+\gge(w_0)+C\eps
+C\kappa\sqrt{\eps}
,\quad\forall T\geq 0.
\end{equation}
\end{theorem}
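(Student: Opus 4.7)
\medskip

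My plan is to prove the two displayed inequalities separately, in the order stated. Finiteness of $E_\eps(T)$ for every $T\geq 0$ has already been observed in Remark~\ref{remFfinita}, so only the monotonicity \eqref{FFprimo} and the quantitative bound \eqref{stimaFT} remain.

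\textbf{Step 1: differentiating $E_\eps$ and using \eqref{formulatmenot}.}
Apply the identity $(\ope f)'=\ope f-f$ from \eqref{magia1} twice to get
$(\ope^2\Weps)' = \ope^2\Weps-\ope\Weps$,
so that
\[
\FF'(T)=K_\eps'(T)+\ope^2\Weps\,(T)-\ope\Weps\,(T).
\]
Now substitute $K_\eps'(T)$ from \eqref{formulatmenot} and expand $L_\eps=D_\eps+\Weps+(\kappa/\eps)\Heps$. The $\Weps$ contributions cancel exactly, leaving
\[
\FF'(T)=-\ope^2 D_\eps\,(T)-3\,\ope D_\eps\,(T)-\frac{\kappa}{\eps}\bigl(\ope\Heps\,(T)+\ope^2\Heps\,(T)\bigr).
\]
Since $D_\eps\geq 0$, discarding the first two (nonpositive) terms yields \eqref{FFprimo}.

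\textbf{Step 2: reduction of the integrated inequality to boundary data at $0$.}
Set $\Phi_\eps(T):=\FF(T)+\tfrac{2\kappa}{\eps}\int_0^T\Heps(t)\,dt$. Writing \eqref{magia1} as $\Heps=\ope\Heps-(\ope\Heps)'$ and iterating to obtain $\Heps=\ope^2\Heps-2(\ope\Heps)'\cdot\tfrac12-\ldots$ (more cleanly: $\ope\Heps+\ope^2\Heps=2\Heps+2(\ope\Heps)'+(\ope^2\Heps)'$), I rewrite
\[
\Phi_\eps'(T)=\FF'(T)+\frac{2\kappa}{\eps}\Heps(T)\leq -\frac{\kappa}{\eps}\Bigl(2(\ope\Heps)'(T)+(\ope^2\Heps)'(T)\Bigr).
\]
Integrating over $[0,T]$, the right-hand side telescopes; dropping the nonpositive boundary terms at $T$ gives
\[
\Phi_\eps(T)\leq \FF(0)+\frac{\kappa}{\eps}\bigl(2\ope\Heps\,(0)+\ope^2\Heps\,(0)\bigr).
\]

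\textbf{Step 3: bounding the right-hand side via \eqref{formulat}.}
Since $K_\eps(0)=\tfrac12\Vert w_1\Vert_{L^2}^2$ by the boundary condition $u_\eps'(0)=\eps w_1$, the right-hand side above equals
\[
\tfrac12\Vert w_1\Vert_{L^2}^2+\ope^2\Weps\,(0)+\frac{\kappa}{\eps}\bigl(2\ope\Heps\,(0)+\ope^2\Heps\,(0)\bigr).
\]
Now expand $\ope^2 L_\eps\,(0)=\ope^2 D_\eps\,(0)+\ope^2\Weps\,(0)+(\kappa/\eps)\ope^2\Heps\,(0)$ in \eqref{formulat}: discarding the nonnegative terms $\ope^2 D_\eps(0)$ and $4\,\ope D_\eps(0)$ yields
\[
\ope^2\Weps\,(0)+\frac{\kappa}{\eps}\bigl(2\ope\Heps\,(0)+\ope^2\Heps\,(0)\bigr)\leq \ope L_\eps\,(0)-R(u_\eps)=\jeps(u_\eps)-R(u_\eps),
\]
where the last equality uses \eqref{opeinzero}. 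Finally, invoke $\jeps(u_\eps)\leq\gge(w_0)+C\eps$ from \eqref{eq:stimalivellosharp} and $|R(u_\eps)|\leq C(\eps+\kappa\sqrt\eps)$ from \eqref{estRu} to obtain \eqref{stimaFT}.

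\textbf{Main obstacle.} The bookkeeping in Step~2---isolating a clean telescopic structure that produces the factor $2$ in $\tfrac{2\kappa}{\eps}\int_0^T\Heps$ on the left and only boundary data on the right---is the only delicate point. The algebraic identity $\ope\Heps+\ope^2\Heps=2\Heps+2(\ope\Heps)'+(\ope^2\Heps)'$, obtained by applying \eqref{magia1} twice, is precisely what makes the integration go through without leftover interior terms. Everything else is a direct substitution using the finiteness asserted in Remark~\ref{remFfinita}, which justifies handling the $\ope^2$ quantities at $T=0$ as ordinary numbers.
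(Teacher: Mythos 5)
Your proposal is correct and follows essentially the same route as the paper: Step 1 reproduces the paper's computation of $\FF'$ via \eqref{magia1} and \eqref{formulatmenot}; your rewriting $\ope\Heps+\ope^2\Heps=2\Heps+2(\ope\Heps)'+(\ope^2\Heps)'$ in Step 2 is just the paper's evaluation of $\int_0^T\ope f$ and $\int_0^T\ope^2 f$ organized as an exact-derivative identity before integrating; and Step 3 (absorbing $\ope^2\Weps(0)$ and the $\Heps$ boundary terms into $\ope^2 L_\eps(0)+\frac{2\kappa}{\eps}\ope\Heps(0)$, then invoking \eqref{formulat}, \eqref{eq:stimalivellosharp} and \eqref{estRu}) is exactly the paper's argument. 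No gaps.
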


\begin{proof}
From Remark \ref{remFfinita} we know that
$\FF$ is absolutely continuous on intervals $[0,T]$.
Hence, differentiating \eqref{defen} and using \eqref{magia1} written with $f=\ope \Weps$ yields
\[
\FF'=K_\eps'-\ope \Weps+\ope^2 \Weps.
\]
But since $\Weps=L_\eps-D_\eps-\frac{\kappa}{\eps}\Heps$,
using \eqref{formulatmenot} we obtain
\[
\FF'=-3\ope D_\eps-\ope^2 D_\eps-\frac{\kappa}{\eps}\ope
\Heps
-\frac{\kappa}{\eps}\ope^2
\Heps,
\]
and \eqref{FFprimo} follows. Choose now
$f=\frac\kappa\eps\Heps$, so that
\eqref{FFprimo} reads $\FF'+\ope f+\ope^2 f\leq 0$. Integrating we
find
\begin{equation}
\label{lll}
\FF(T)+\int_0^T \ope f\,dt
+\int_0^T \ope^2 f\,dt\leq \FF(0).
\end{equation}
For the former integral, using \eqref{magia1} we have
\[
\int_0^T \ope f\,dt
=
\int_0^T f\,dt+\ope f\,(T)-\ope f\,(0).
\]
For the latter, iterating twice the same argument gives
\[
\int_0^T \ope^2 f\,dt
=
\int_0^T f\,dt+\ope^2 f\,(T)+\ope f\,(T)-\ope^2 f\,(0)-\ope f\,(0),
\]
so that \eqref{lll}, in particular, yields
\begin{align*}
&\FF(T)+2\int_0^T f(t)\,dt\leq
\FF(0)+\ope^2 f\,(0)+2\ope f\,(0)=\\
&K_\eps(0)+\ope^2\Weps\,(0)+\ope^2 f\,(0)+2\ope f\,(0)
\leq
\frac 1 2\Vert w_1\Vert_{L^2}^2 +\ope^2 L_\eps\,(0)+\frac{2\kappa}{\eps}\ope H\,(0).
\end{align*}
Therefore, since $4D_\eps(t)\geq 0$, using
\eqref{formulat}
we find that
\[
\FF(T)+2\int_0^T f(t)\,dt\leq
\frac 1 2\Vert w_1\Vert_{L^2}^2
+\ope L_\eps\,(0)
 -R(u_\eps),
\]
and since $\ope L_\eps\,(0)=\jeps(u_\eps)$, from \eqref{eq:stimalivellosharp} we see that
\eqref{stimaFT} follows from \eqref{estRu}.
\end{proof}

\section{Proof of the a priori estimates}
In this section
we prove part $(a)$ of theorems \ref{mainteo1} and \ref{mainteo2}.

As discussed
at the beginning of Section \ref{sec2}, the minimizers $\weps$
of $\feps$ in \eqref{eq:deffepsmisto} (subject to \eqref{eq:condin2})
are related to the minimizers $u_\eps$
of $\jeps$ in \eqref{eq:defjeps} (subject to \eqref{eq:condiniz}) by the
change of variable \eqref{eq:uv}
and in particular the functions $w_\eps$ satisfy the boundary conditions
\begin{equation}
  \label{eq:icv}
  \weps(0,x)=w_0(x),\quad \weps'(0,x)= w_1(x).
\end{equation}
So the estimates on $\weps$ will follow from analogous estimates on $u_\eps$ by scaling.

\begin{proof}[Proof of \eqref{eq:stimaaprioriprime}]
Scaling as in \eqref{eq:uv} and using \eqref{stimaFT} and \eqref{defFF} yields
\[
\frac 1 2\spint |\weps'(t,x)|^2\,dx=
K_\eps(t/\eps) \leq C,
\]
which proves the first estimate
in \eqref{eq:stimaaprioriprime}. The second estimate follows
immediately from the first and the boundary condition  in \eqref{eq:icv}, since $w_0 \in W\embed L^2$.
\end{proof}

\begin{proof}[Proof of \eqref{provvD}] When $\kappa=1$, observe that \eqref{stimaFT} gives
\begin{equation}
\label{serveanchedopo}
\int_0^\infty \Heps(t)\,dt
=\int_0^\infty \diss(u'_\eps(t))\,dt\leq C\eps,
\end{equation}
and \eqref{provvD} follows from \eqref{eq:uv} and scaling, using \eqref{eq:strutD}.
\end{proof}

\begin{proof}[Proof of \eqref{eq:stimaapriori}]
Since $L_\eps\geq 0$, we have from \eqref{eq:stimalivello}
\begin{equation}
\label{tpiccolo}
e^{-2}\int_0^2 \Weps(t)\,dt\leq \int_0^2 \emt L_\eps(t)\,dt
\leq  \jeps(u_\eps)\leq C.
\end{equation}
In the same spirit, we have for every $s\geq 0$,
\[
e^{-2}\int_{s+1}^{s+2} \Weps(t)\,dt
\leq  \int_{s+1}^{s+2} (t-s)e^{-(t-s)} \Weps(t)\,dt
\leq  \ope^2 \Weps\,(s)\leq  \FF(s)\le C.
\]
which, combined with \eqref{tpiccolo}, yields
\begin{equation}
\label{stimass1}
\int_s^{s+1} \Weps(t)\,dt\leq C\quad\forall s\geq 0.
\end{equation}
Writing $s=\tau/\eps$ and scaling, recalling \eqref{short} we obtain that
\begin{equation}
\label{cover}
\int_\tau^{\tau+\eps} \gge \bigl(w_\eps(z)\bigr)\,dz\leq C\eps\quad\forall \tau\geq 0.
\end{equation}
Now, if $\tau\geq 0$ and $T\geq\eps$ as in \eqref{eq:stimaapriori},
by covering $[\tau,\tau+T]$ with consecutive intervals of length $\eps$
and using \eqref{cover} in each interval, one obtains \eqref{eq:stimaapriori}.
\end{proof}

In the next lemma we are going to use the inequality
\begin{equation}
\label{stimagg} \int_t^{t+1} \Vert \nabla \gge
\bigl(u_\eps(t)\bigr)\Vert_{W'}^{\frac 1 \theta}\,dt
\leq C\quad\forall t\geq 0,
\end{equation}
which follows immediately on combining \eqref{stimagattheta} and
\eqref{stimass1}.

\begin{lemma}[Euler--Lagrange equation]\label{lemmaeul}
Suppose that $\eta(t,x)=\varphi(t)h(x)$, with  $\varphi \in C^{1,1}([0,+\infty))$,
$\varphi(0)=\varphi'(0)=0$ and $h\in W\cap H$.
Then
\begin{equation}
  \label{eq:eulerp}
\int_0^\infty
\emt
\left(
\frac 1{\eps^2}
\langle
u''_\eps,\eta''
\rangle_{L^2}
+
\langle \nabla
\gge\bigl(u_\eps(t)\bigr), \eta\rangle + \ksue\langle \nabla
\diss\bigl(u'_\eps(t)\bigr), \eta'\rangle \right)\,dt =0.
\end{equation}
Moreover, the same conclusion holds if $\eta\in C^\infty_0(\R^+\times \R^n)$.
\end{lemma}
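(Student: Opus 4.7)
The identity \eqref{eq:eulerp} is the first-variation equation of $\jeps$ at the minimizer $u_\eps$ along the perturbation $\eta$. I would form the competitor $u_\eps+\delta\eta$ for small $|\delta|$; since $\varphi(0)=\varphi'(0)=0$, this function again satisfies \eqref{eq:condiniz}, so minimality forces $\frac{d}{d\delta}\jeps(u_\eps+\delta\eta)\big|_{\delta=0}=0$, which once expanded termwise coincides with \eqref{eq:eulerp}. The first step is to check admissibility, $\jeps(u_\eps+\delta\eta)<\infty$. The $|u''|^2$-piece is bounded via \eqref{eq:5}, $\varphi''\in L^\infty_{\rm loc}$ and $h\in L^2$; the $\diss$-piece similarly via (H2), \eqref{eq:6} and $h\in H$. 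For the $\gge$-piece, the growth bound \eqref{growth} with $a=u_\eps(t)$, $b=\delta\varphi(t)h$ yields
\[
\gge\bigl(u_\eps(t)+\delta\varphi(t)h\bigr)\le C\bigl(1+\gge(u_\eps(t))+|\delta\varphi(t)|^{1/(1-\theta)}\|h\|_W^{1/(1-\theta)}\bigr),
\]
which is $e^{-t}$-integrable by \eqref{eq:stimalivello} (provided $\varphi$ grows at most sub-exponentially, automatic in the cases of interest).

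Next I would differentiate termwise under the time integral at $\delta=0$. The quadratic parts give $\frac{1}{\eps^2}\langle u_\eps'',\eta''\rangle_{L^2}$ and $\ksue\langle\nabla\diss(u_\eps'),\eta'\rangle$ (using $\nabla\diss(v)=B(v,\cdot)$ from \eqref{gradDDE}). For the $\gge$-part, (H1) supplies the pointwise-in-$t$ derivative $\varphi(t)\langle\nabla\gge(u_\eps(t)),h\rangle$; to move the $\delta$-derivative inside the $t$-integral I would apply dominated convergence using the quantitative Lipschitz bound \eqref{stimalip},
\[
\left|\frac{\gge(u_\eps(t)+\delta\varphi(t)h)-\gge(u_\eps(t))}{\delta}\right|\le C|\varphi(t)|\|h\|_W\bigl(1+\gge(u_\eps(t))+|\delta\varphi(t)|^{1/(1-\theta)}\|h\|_W^{1/(1-\theta)}\bigr),
\]
uniformly for small $|\delta|$; multiplied by $e^{-t}$, this majorant lies in $L^1(0,\infty)$ by \eqref{eq:stimalivello}. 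Equating the sum of the three derivatives to zero produces \eqref{eq:eulerp}.

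The principal obstacle is precisely the nonlinearity of $\gge$: both admissibility and the legality of differentiating under the integral would be trivial in a purely quadratic setting but here require the quantitative growth and Lipschitz estimates \eqref{growth} and \eqref{stimalip}, which is exactly the purpose for which those bounds were derived in Section~2.

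For the extension to $\eta\in C^\infty_0(\Rpiu\!\times\!\R^n)$ (supported in the open half-space, so the boundary requirements at $t=0$ are automatic), I would approximate $\eta$ in $C^2$-norm on its support by finite separated-variable sums $\sum_i\varphi_i(t)h_i(x)$ with $\varphi_i\in C^\infty_c((0,\infty))$ and $h_i\in C^\infty_0(\R^n)\subset W\cap H$ (the inclusions by \eqref{densembed} and \eqref{HembedL2}); a standard Fourier expansion on a box containing the support of $\eta$ suffices. Each term on the left-hand side of \eqref{eq:eulerp} is a linear functional of $\eta$ that is continuous under such approximation, using \eqref{stimagat} together with \eqref{eq:stimalivello} to dominate $\|\nabla\gge(u_\eps(t))\|_{W'}$ in an $L^1(e^{-t}\,dt)$-fashion, so the identity passes to the limit.
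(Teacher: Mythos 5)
Your proposal is correct and follows essentially the same route as the paper: form the competitor $u_\eps+\delta\eta$, verify admissibility via the growth bound \eqref{growth}--\eqref{growth2} (noting that $\varphi\in C^{1,1}$ forces at most quadratic growth, which makes your ``sub-exponential'' proviso automatic), justify differentiation under the integral with \eqref{stimalip}, and extend to general test functions by density of separated-variable sums in $C^\infty_0(\R^+\times\R^n)$ --- where the paper simply cites Schwartz's theorem rather than an explicit Fourier construction.
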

\begin{proof}
The Euler--Lagrange equation \eqref{eq:eulerp} corresponds
to the condition $f'(0)=0$ where $f(\delta)=\jeps(u_\eps+\delta\eta)$; it is enough 
to justify differentiation under the
integral sign in \eqref{eq:defjeps} in the term involving $\gge$ (the
term with $\diss$ is quadratic due to \eqref{eq:strutD}).

First consider the case where $\eta=\varphi(t)h(x)$, and set $v=u_\eps+\delta \eta$ with,
say, $|\delta|\leq 1$. As $\varphi\in C^{1,1}$, $\varphi(t)$ grows at most quadratically as $t\to\infty$;
applying \eqref{growth2} with
$a=u_\eps(t)$ and $b=\varphi(t) h$, multiplying by $\emt$ and integrating,
one sees that $\jeps(v)$ is finite (and $v$ satisfies the boundary conditions
\eqref{eq:condiniz}). For a.e. $t>0$, we have
\[
\frac {d}{d\delta} \gge\bigl(u_\eps(t)+\delta \eta(t)\bigr)
_{\big\vert\delta=0}
=
\langle
\nabla \gge\bigl(u_\eps(t)\bigr),\eta(t)\rangle=
\varphi(t)
\langle
\nabla \gge\bigl(u_\eps(t)\bigr),h\rangle
\]
and this function, multiplied by $\emt$, is integrable on $\R^+$
due to \eqref{stimagg}.
Indeed, one can easily check that differentiation
in $\delta$ under the integral sign
is justified,
now using \eqref{stimalip},
with $a$ and $b$ as before.

Now consider a generic test function $\eta\in C^\infty_0(\R^+\times \R^n)$.
Due to \eqref{stimagg} and \eqref{densembed}, the left hand side
of \eqref{eq:eulerp} defines
 a distribution on $\R^+\times \R^n$.
If $\eta=\varphi(t)h(x)$ with $\varphi\in C^\infty_0(\R^+)$ and
$h\in C^\infty_0(\R^n)$, then in particular $\varphi\in C^{1,1}([0,+\infty))$ and
\eqref{eq:eulerp} has just been established. The general case then follows
from the fact that  test function of the form
 $\varphi(t)h(x)$ are dense in $C^\infty_0(\R^+\times \R^n)$ (see \cite{Schwartz},
Chap.~IV, and in particular Thm.~III).
\end{proof}

\begin{proof}[Proof of \eqref{eq:stimaaprioridue}, \eqref{eq:stimaaprioriduebis}.] These
estimates will follow from the following representation formula (proved below)
for $u''_\eps$, valid for a.e. $T>0$:
\begin{equation}
\label{reprformula}
\frac 1{\eps^2}
\langle
u''_\eps(T),
h\rangle_{L^2}
=
-\ope^2 f_1\,(T)
-\frac \kappa\eps
\ope f_2\,(T),\qquad
h\in\begin{cases}
W &\text{if $\kappa=0$},\\
W\cap H &\text{if $\kappa=1$}
\end{cases}
\end{equation}
where
\begin{equation}
\label{defunodue}
f_1(t)=\langle \nabla\gge(u_\eps(t)),h\rangle,\qquad
f_1(t)=\langle \nabla\diss(u'_\eps(t)),h\rangle.
\end{equation}
Note that using \eqref{stimagat},
\[
|f_1(t)|\leq \Vert h\Vert_W
\Vert
\nabla\gge(u_\eps(t))
\Vert_{W'}
\leq
C\Vert h\Vert_W (1+\Weps(t)).
\]
But since $\ope^2 1=1$ by \eqref{opeopeinzero}, and $\ope^2 \Weps\leq \FF\leq C$
by \eqref{defen} and \eqref{stimaFT}, we have
\begin{equation}
\label{stimaf1}
\left\vert
\ope^2 f_1\,(T)\right\vert
\leq
\ope^2 |f_1|\,(T)\leq
C\Vert h\Vert_W\quad
\forall T\geq 0.
\end{equation}
Thus, if $\kappa=0$, \eqref{reprformula} can be seen (via the second inclusion in
\eqref{densembed}) as a representation formula for $u''_\eps(T)$ as an element
of $W'$, and the last estimate gives
\[
\frac1{\eps^2}\Vert u''_\eps(T)\Vert_{W'}\leq C\quad
\text{for a.e.  $T\geq 0$.}
\]
Then, scaling according to \eqref{eq:uv}, one obtains \eqref{eq:stimaaprioridue}.

In addition, if $\kappa=1$ and $h\in W\cap H$, we have using \eqref{gradDDE}
\[
|f_2(t)|\leq \Vert h\Vert_H
\Vert
\nabla\diss(u'_\eps(t))
\Vert_{H'}
\leq
C\Vert h\Vert_H \sqrt{\Heps(t)}
\]
and thus, using \eqref{serveanchedopo},
$\Vert f_2\Vert_{L^2(\R^+)}\leq C\sqrt{\eps}\Vert h\Vert_{H}$.
Therefore, since the operator
$\ope$ maps  $L^2(\Rpiu)$ continuously into itself, we find that
\[
\Vert \ope f_2\Vert_{L^2(\Rpiu)}\leq C\sqrt{\eps}\Vert h\Vert_{H}.
\]
Then, recalling \eqref{HembedL2}, \eqref{reprformula} can be written
as $u''_\eps/\eps^2=\Phi_1+\Phi_2$,
with the bounds $\Vert \Phi_1\Vert_{L^\infty(\Rpiu;W')}\leq C$ by \eqref{stimaf1},
and $\Vert \Phi_2\Vert_{L^2(\Rpiu;H')}\leq C/\sqrt{\eps}$ by the previous
inequality. Scaling according to \eqref{eq:uv}, this means that
$w''_\eps(t)=\Phi_1(t/\eps)+\Phi_2(t/\eps)$, and \eqref{eq:stimaaprioriduebis}
follows since $\Vert \Phi_2(t/\eps)\Vert_{L^2}=\sqrt{\eps}\Vert\Phi_2\Vert_{L^2}$.

It remains to prove \eqref{reprformula}.
For $T,\delta>0$, we take the $C^{1,1}$ function $g_\delta$ defined in \eqref{funzg}.
Given $h$ as in \eqref{reprformula}, we set $\eta(t,x)=g_\delta(t) h(x)$ and we apply Lemma~\ref{lemmaeul}.

As $g_\delta''(t)=
\delta^{-1} \chi_{(T,T+\delta)}(t)$, \eqref{eq:eulerp} multiplied by $e^T$ reads
\begin{align*}
\frac {e^T} {\eps^2 \delta}\int_T^{T+\delta} e^{-t}
\langle u_\eps''(t), h\rangle_{L^2} \,dt = -
\int_T^\infty e^{-(t-T)}
\left(
g_\delta(t)
f_1(t)
+
\ksue
g'_\delta(t)
f_2(t)
\right)
\,dt
\end{align*}
with $f_1,f_2$ as in \eqref{defunodue}.
Since $|g_\delta(t)|\leq  (t-T)^+$ and $|g_\delta'(t)|\leq \chi_{(T,\infty)}$,
one can dominate the integrand functions as done above for
$f_1$ and $f_2$.
Finally, letting $\delta\downarrow 0$, $g_\delta \to (t-T)^+$ and $g'_\delta \to \chi_{(T,\infty)}$,
and one obtains \eqref{reprformula} for a.e. $T$.
\end{proof}

\section{Proof of convergence and energy inequality}
In this section we first prove parts (b) and (c) of theorems \ref{mainteo1}
and \ref{mainteo2}. Then, we prove Theorem~\ref{mainteo3}.

In the sequel, we deal with a sequence of minimizers
$w_{\eps_i}$ as in $(b)$ of Theorem~\ref{mainteo1}, and
we will \emph{tacitly} extract  several subsequences. For ease of notation, however, we will denote by
$w_\eps$ the original sequence, as well as the subsequences we extract.

\begin{proof}[Proof of part (b): passage to the limit.] Regardless of $\kappa\in\{0,1\}$,  \eqref{eq:stimaaprioriprime} shows
that the $\weps$ are equibounded in $H^1\loc([0,\infty); L^2)$.
Precisely, for every $T>0$ there exists a constant $C_T$ such that
\begin{equation}
\label{bounds}
\|\weps \|_{H^1((0,T); L^2)}^2 = \int_0^T \left( \|\weps'(t)\|_{L^2} + \|\weps(t)\|_{L^2}\right)\, dt \le C_T.
\end{equation}
Thus there exists  a function $w \in H^1\loc([0,\infty); L^2)$ such that
\begin{equation}
\label{limitv}
\weps \rightharpoonup w\quad \text{in $H^1\loc([0,\infty); L^2)$}\quad\text{and}\quad
\weps(t) \rightharpoonup w(t)\quad\text{ in $L^2$}\;\;
\forall t\geq 0
\end{equation}
as $\eps \to 0$. Clearly, the claims on $w'$, $w''$ in \eqref{eq:welleinfinito1}
and \eqref{wsecondobis}
 follow
from the uniform bounds in \eqref{eq:stimaaprioriprime}, \eqref{eq:stimaaprioridue}
and \eqref{eq:stimaaprioriduebis}.
Moreover, when $\kappa = 1$,
since $H$ is normed by
$\Vert v\Vert_H^2=\Vert v\Vert_{L^2}^2+2\diss(v)$,
\eqref{provvD} combined with
\eqref{bounds}
provide a uniform bound for $\weps'$
in $L^2((0,T); H)$ for every $T>0$, whence \eqref{convL2H}.

To prove
that $w$ satisfies \eqref{eq:condin2}, we recall that
these two conditions are satisfied, by assumption,
by each $w_\eps$:
then the first condition for $w$ follows easily from
the second part of
\eqref{limitv}, considering $t=0$.

For the second condition,
if
$\kappa=0$ then \eqref{eq:stimaaprioridue} and \eqref{eq:stimaaprioriprime}
(combined with $L^2\embed W'$, that follows from \eqref{densembed}) yield
a uniform bound for $w_\eps'$ in $W^{1,\infty}(\R^+;W')$, which guarantees
the maintenance, in the limit, of $w_\eps'(0)=w_1$ (now viewed as an equality in $W'$).
If $\kappa=1$ then the argument is similar: since $W\cap H\embed L^2$ densely
by \eqref{densembed} and \eqref{HembedL2},
in particular \eqref{eq:stimaaprioriduebis} yields a uniform bound for
$w_\eps''$ in $L^2((0,1);(W\cap H)')$, hence a bound for $w_\eps'$ in
$H^1((0,1);(W\cap H)')$, sufficient to guarantee that $w'(0)=w_1$
(now seen as an equality in $(W\cap H)'$).
\end{proof}

\begin{proof}[Proof of part (c): energy inequality.]
To obtain \eqref{eq:quasiconservation}
and \eqref{eq:quasiconservation2} we need the following Lemma, proved in
\cite{Noi} and reformulated here in terms of the operator $\ope$.

\begin{lemma}\label{lemmatech}
Let $l(t)$, $m(t)$ be nonnegative functions in $ L^1_\text{loc}$, such that
\begin{equation}
\label{ipotesi}
(\ope^2 l)(t)\leq m(t)\quad\text{ for
a.e. $t>0$.}
\end{equation}
Then, for every pair of numbers
$a>0$ and $\delta\in (0,1)$,
\[
\left(\int_0^{\delta a}s e^{-s}\,ds\right)\int_{T+\delta a}^{T+a}l(t)\,dt
\leq \int_T^{T+a} m(t)\,dt\quad\forall T\geq 0.
\]
\end{lemma}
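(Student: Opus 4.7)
The plan is to integrate the pointwise hypothesis $(\ope^2 l)(s)\leq m(s)$ over $s\in[T,T+a]$, swap the order of integration on the left-hand side via Tonelli (all quantities are nonnegative), and then discard part of the resulting double integral in a favorable way.

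Concretely, by the definition of $\ope^2$ in \eqref{opeopeinzero},
\[
\int_T^{T+a} m(s)\,ds \;\geq\; \int_T^{T+a}\!\!\int_s^{\infty} e^{-(t-s)}(t-s)\,l(t)\,dt\,ds
\;=\; \int_T^{\infty} l(t) \left(\int_T^{\min(t,\,T+a)}\!\! e^{-(t-s)}(t-s)\,ds\right) dt,
\]
by Tonelli. Since all terms are nonnegative, I may further restrict the outer integration to $t\in[T+\delta a,T+a]$, on which $\min(t,T+a)=t$, and the inner integral becomes (substituting $u=t-s$)
\[
\int_T^{t} e^{-(t-s)}(t-s)\,ds \;=\; \int_0^{t-T} u\,e^{-u}\,du \;\geq\; \int_0^{\delta a} u\,e^{-u}\,du,
\]
using that $t-T\geq \delta a$ and that the integrand $u\,e^{-u}$ is nonnegative. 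Substituting this lower bound into the previous display yields
\[
\int_T^{T+a} m(s)\,ds \;\geq\; \left(\int_0^{\delta a} u\,e^{-u}\,du\right)\int_{T+\delta a}^{T+a} l(t)\,dt,
\]
which is the desired estimate.

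There is essentially no obstacle here: the argument is a one-line application of Tonelli's theorem followed by monotonicity of the incomplete integral $\int_0^R u e^{-u}\,du$ in $R$. The only point requiring a moment of care is bookkeeping the region of $(s,t)$ where the exponential kernel is integrated, namely $T\leq s\leq t$ with $s\leq T+a$, and noting that throwing away the contribution from $t>T+a$ (where one would only integrate over $s\in[T,T+a]$) is harmless since the goal is merely a lower bound on the right-hand side.
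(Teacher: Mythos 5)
Your proof is correct: integrating the hypothesis over $s\in[T,T+a]$, swapping the order of integration by Tonelli (legitimate since everything is nonnegative), restricting to $t\in[T+\delta a,T+a]$ where the inner integral equals $\int_0^{t-T}ue^{-u}\,du\geq\int_0^{\delta a}ue^{-u}\,du$, gives exactly the stated bound. The paper does not reproduce a proof of this lemma (it is quoted from the reference [Noi]), but your Tonelli argument is the natural one and is essentially the same as the proof given there.
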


Recalling \eqref{defen} and \eqref{stimaFT},
we can apply  Lemma \ref{lemmatech} with $l(t)= \Weps(t)$ and
\[
m(t)=
-K_\eps(t)-\frac{2\kappa}{\eps}\int_0^t
\Heps(s)\,ds
+
\frac 1 2 \Vert w_1\Vert_{L^2}^2+\gge(w_0)+C\sqrt{\eps}
\]
(assumption \eqref{ipotesi} corresponds to \eqref{stimaFT} via \eqref{defen}). This gives,
 for every $T\ge 0$, every $a>0$ and every $\delta \in (0,1)$,
\[
\begin{split}
Y(\delta a) &\int_{T+\delta a}^{T+a} \Weps(t)\,dt  \\
&\leq
-\int_{T}^{T+a}\left(K_\eps(t)+\frac{2\kappa}{\eps}\int_0^t \Heps(s)\,ds \right)\,dt
+a{\mathcal E}(0)+aC\sqrt{\eps}
\end{split}
\]
where, for simplicity, $Y(z)=\int_0^{z} s e^{-s}\,ds$
and ${\mathcal E}(0)$ is defined as in \eqref{eq:quasiconservation}.
Now, recalling \eqref{eq:uv}, we want to rewrite this estimate in terms of $w_\eps$
instead of $u_\eps$: in view of this, it is convenient to first replace $T$ with
$T/\eps$ and $a$ with $a/\eps$, and then change variable in the integrals
according to \eqref{eq:uv}, thus obtaining, rearranging terms,
\[
\begin{split}
Y(\delta a/\eps) &\int_{T+\delta a}^{T+a} \gge(w_\eps(t))\,dt
+\int_{T}^{T+a}\left(\frac 1 2 \Vert w_\eps'(t)\Vert_{L^2}^2
+2\kappa\int_0^t \diss (w_\eps'(s))\,ds \right)\,dt
\\
&\leq a{\mathcal E}(0)+aC\sqrt{\eps},\qquad
\forall T\geq 0,\quad\forall a>0,\quad\forall\delta\in (0,1).
\end{split}
\]
Now, for fixed $T,a,\delta$, recalling \eqref{limitv} we let $\eps\to 0$
in the previous estimate. Since $Y(\delta a/\eps)\to 1$, we obtain
by semicontinuity
\[
\int_{T+\delta a}^{T+a} \gge(w(t))\,dt
+\int_{T}^{T+a}\left(\frac 1 2 \Vert w'(t)\Vert_{L^2}^2
+2\kappa\int_0^t \diss (w'(s))\,ds \right)\,dt
\leq a{\mathcal E}(0)
\]
(for the integral involving $\gge$ one uses Fatou's Lemma,
\eqref{limitv} and \eqref{lsci}, while the double integral with $\diss$
is a convex and strongly continuous function of $w'_\eps$ in
$L^2((0,T+a);H)$, and one may use \eqref{convL2H}).
Now we let $\delta\to 0^+$ (with $T$ and $a$ fixed), then we divide by $a$
and finally we let $a\to 0^+$, to obtain
\[
\gge(w(T))+\frac 1 2 \Vert w'(T)\Vert_{L^2}^2
+2\kappa\int_0^T \diss (w'(s))\,ds
\leq {\mathcal E}(0)\quad\text{for a.e. $T\geq 0$.}
\]
When $\kappa=0$ this reduces to \eqref{eq:quasiconservation}, while
when $\kappa=1$ this reduces to \eqref{eq:quasiconservation2}.
\end{proof}

\begin{lemma}
For every test function $\eta\in C^\infty_0(\spt)$, there holds
\begin{equation}
\begin{split}
\label{eulerp2}
 &\int_0^\infty
 \langle w_\eps'(\tau), \eps^2\eta'''(\tau)+2\eps\eta''(\tau)+\eta'(\tau)\rangle_{L^2}
 \,d\tau\\
=
 &\int_0^\infty
 \left(
\langle \nabla
\gge\bigl(w_\eps(\tau)\bigr), \eta(\tau)\rangle
+
\kappa\langle \nabla \diss\bigl(w'_\eps(\tau)\bigr), \eta(\tau)+\eps\eta'(\tau)\rangle
\right)\,d\tau.
\end{split}
\end{equation}
\end{lemma}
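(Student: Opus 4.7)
The plan is to reduce \eqref{eulerp2} to the Euler--Lagrange identity for $u_\eps$ already proved in Lemma~\ref{lemmaeul}, by choosing a test function that exactly undoes both the time rescaling $\tau=\eps t$ (relating $w_\eps$ to $u_\eps$ through \eqref{eq:uv}) and the exponential weight $e^{-t}$ appearing in \eqref{eq:eulerp}. Given $\eta\in C^\infty_0(\R^+\times\R^n)$, I would feed the Lemma the test function
\[
\zeta(t,x):=e^{t}\,\eta(\eps t, x),
\]
which is again in $C^\infty_0(\R^+\times\R^n)$ because the support of $\eta$, being compact and strictly interior to $\R^+\times\R^n$, is preserved by the dilation $t\mapsto\eps t$ and unaffected by the bounded factor $e^{t}$.

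A direct calculation shows that
\[
e^{-t}\zeta=\eta(\eps t,\cdot),\quad
e^{-t}\zeta'=\eta+\eps\eta',\quad
e^{-t}\zeta''=\eta+2\eps\eta'+\eps^2\eta'',
\]
with derivatives of $\eta$ taken in its first argument and evaluated at $\eps t$. Inserting $\zeta$ into \eqref{eq:eulerp} the weight $e^{-t}$ absorbs neatly into these three polynomial-in-$\eps$ expressions. I would then change variables $\tau=\eps t$ in each integral, using the relations $u_\eps(t)=w_\eps(\tau)$, $u_\eps'(t)=\eps w_\eps'(\tau)$, $u_\eps''(t)=\eps^2 w_\eps''(\tau)$ from \eqref{eq:uv}, together with the homogeneity $\nabla\diss(\eps v)=\eps\nabla\diss(v)$ inherited from the quadratic structure \eqref{eq:strutD}. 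All powers of $\eps$ then cancel consistently and one arrives at
\[
\int_0^\infty \langle w_\eps'',\eta+2\eps\eta'+\eps^2\eta''\rangle_{L^2}\,d\tau
+\int_0^\infty \langle\nabla\gge(w_\eps),\eta\rangle\,d\tau
+\kappa\int_0^\infty\langle\nabla\diss(w_\eps'),\eta+\eps\eta'\rangle\,d\tau=0.
\]

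The last step is one integration by parts in $\tau$ applied to the first integral: since $\eta$ (and hence $\eta+2\eps\eta'+\eps^2\eta''$) is compactly supported away from $\tau=0$ and $\tau=\infty$, the boundary terms vanish, and the regularity $w_\eps\in H^2\loc([0,\infty);L^2)$ legitimizes the calculation. This converts the pairing with $w_\eps''$ into the pairing of $-w_\eps'$ against $\eta'+2\eps\eta''+\eps^2\eta'''$, and a sign rearrangement produces exactly \eqref{eulerp2}. The only delicate point — and essentially the whole content of the argument — is choosing the test function $\zeta=e^t\eta(\eps t,\cdot)$ so that the interplay between the $e^{-t}$ weight, the dilation $t\mapsto\eps t$, and the linearity of $\nabla\diss$ matches exactly the operator $\eps^2\partial_\tau^3+2\eps\partial_\tau^2+\partial_\tau=(\eps\partial_\tau+1)^2\partial_\tau$ that appears on the left of \eqref{eulerp2}; once this is recognized, everything else is a mechanical verification.
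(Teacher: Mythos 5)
Your proposal is correct and follows essentially the same route as the paper: the authors also substitute the test function $e^{t}\psi$ with $\psi(t,x)=\eta(\eps t,x)$ into \eqref{eq:eulerp} (doing it in two steps rather than your single composite $\zeta=e^{t}\eta(\eps t,\cdot)$), change variables $\tau=\eps t$ via \eqref{eq:uv}, and integrate by parts once in the $u''_\eps$ term. The computations and the justification that $\zeta\in C^\infty_0(\Rplus\times\R^n)$ are sound, so nothing further is needed.
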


\begin{proof}
Let $\psi\in C^\infty_0(\spt)$. Choosing
$\eta=e^t\psi$ in \eqref{eq:eulerp} gives
\begin{equation*}
\begin{split}
 &\int_0^\infty
\frac 1 {\eps^2}
\langle u_\eps''(t), \psi''(t)+2\psi'(t)+\psi(t) \rangle_{L^2}\,dt \\
=&-\int_0^\infty \left(
\langle \nabla
\gge\bigl(u_\eps(t)\bigr), \psi(t)\rangle
+
\ksue\langle \nabla\diss\bigl(u'_\eps(t)\bigr), \psi(t)+\psi'(t)\rangle
\right)\,dt .
\end{split}
\end{equation*}
Now we replace $u_\eps$ with $w_\eps$ using \eqref{eq:uv} and, accordingly,
we take $\psi$ of the form $\psi(t,x)=\eta(\eps t,x)$, for an arbitrary test function
$\eta$. Plugging into the last equation and changing variable $\tau=\eps t$
in each integral, one obtains \eqref{eulerp2} after
integrating by parts the first term.
\end{proof}

\begin{proof}[Proof of Theorem \ref{mainteo3}] The functional $v\mapsto 1/2\Vert v\Vert_{\dot H^m}^2$,
clearly satisfies assumption (H1), on letting $W=H^m$ and $\theta=1/2$:
then first part of the claim
follows on combining Remarks~\ref{rema1} and~\ref{rema2}. Thus, one may apply
Theorem~\ref{mainteo1} (or~\ref{mainteo2}, if $\kappa=1$). We wish to
pass to the limit in \eqref{eulerp2}, in particular in the nonlinear
term involving $\nabla\gge$, namely we wish to prove that (up to subsequences)
\begin{equation}
\label{nllimit}
\lim_{\eps\to 0}
\int_0^\infty  \langle \nabla
\gge\bigl(w_\eps(\tau)\bigr), \eta(\tau)\rangle \,d\tau
=
\int_0^\infty  \langle \nabla
\gge\bigl(w(\tau)\bigr), \eta(\tau)\rangle \,d\tau,
\end{equation}
where $w$ is the limit function obtained by Theorem~\ref{mainteo1} (or \ref{mainteo2}).
Due to \eqref{struct2} (see also Remark~\ref{rema1} and \eqref{gradiente}),
we have
\[
\begin{split}
&\int_0^\infty  \langle \nabla
\gge\bigl(w_\eps(\tau)\bigr), \eta(\tau)\rangle \,d\tau
=
\int_0^\infty \langle w_\eps(\tau),\eta(\tau)\rangle_{\dot H^m}\,d\tau\\
+&
\sum_{0\leq k<m}
\lambda_k \int_0^\infty
\int \left| \nabla^k w_\eps(\tau) \right|^{p_k-2}\nabla^k w_\eps(\tau)\cdot
\nabla^k\eta(\tau)\,dxd\tau.
\end{split}
\]
Thus, to prove \eqref{nllimit}, we need \emph{strong} convergence of
$|\nabla^k \weps|^{p_k-2}\nabla^k \weps$ ($k<m$) in $L^{1}(Q)$, for every
cylinder $Q=(0,T)\!\times\! B$ ($B$ being a ball in $\R^n$),
and \emph{weak} convergence of $w_\eps$ in $L^2((0,T);H^m)$.

Now, due to \eqref{struct2}, the bounds in \eqref{eq:stimaapriori} (with $\tau=0$)
take the concrete form
\begin{equation*}
\int_0^T \left(
\Vert w_\eps(t)\Vert_{\dot H^m}^2+
\sum_{0\leq k<m}\lambda_k
\int_{\R^n}
\left| \nabla^k \weps(t,x)\right|^{p_k}
\,dx\right)\,dt\leq C_T\quad (T\geq 1),
\end{equation*}
so that, combining with the second part of \eqref{eq:stimaaprioriprime},
$w_\eps$ is \emph{weakly compact} in
$L^2((0,T);H^m)$, while
$\nabla^k \weps$  is \emph{bounded} in $L^{p_k}(Q)$
(we focus on  those $k$ for which $\lambda_k>0$).
Thus, to conclude,
it suffices  to check
the \emph{strong} convergence of $\nabla^k \weps$
in $L^2(Q)$  (this condition is even stronger than necessary,
if $p_k<3$). Now
fix a cylinder $Q=(0,T)\!\times\! B$. If $0\leq k<m$ we have that
$H^m(B)\embed H^{k}(B)\embed L^2(B)$,
and the first injection is compact: thus,
combining
the bound for $w_\eps$
in $L^2((0,T);H^m)$ with the bound
for $w_\eps'$ in
$L^2((0,T);L^2(B))$ (from \ref{eq:stimaaprioriprime}),
we obtain the \emph{strong compactness} of $w_\eps$
in $L^2((0,T);H^k(B))$
(see e.g. Thm.~5.1 in \cite{Lions}), whence $\nabla^k w_\eps$ converges strongly in
$L^2(Q)$.

The terms in \eqref{eulerp2} other than $\nabla\gge(w_\eps)$
are \emph{linear} in $w_\eps$, and using (b) of Theorem~\ref{mainteo1}
one can pass to the limit in \eqref{eulerp2} when $\kappa=0$.
Finally, if $\kappa=1$, recalling \eqref{eq:strutD} and \eqref{gradDDE}, also the term involving
$\nabla\diss(w_\eps)$ passes to the limit in \eqref{eulerp2}, using
\eqref{convL2H} and \eqref{gradDDE}. In either case, taking the limit in \eqref{eulerp2} one obtains
\begin{equation*}
\int_0^\infty
 \langle w'(\tau), \eta'(\tau)\rangle_{L^2}
 \,d\tau\\
=
 \int_0^\infty
 \Bigl(
\langle \nabla
\gge\bigl(w(\tau)\bigr), \eta(\tau)\rangle
+
\kappa\langle \nabla \diss\bigl(w'(\tau)\bigr), \eta(\tau)\rangle
\Bigr)\,d\tau
\end{equation*}
that is, $w$ is a weak solution of \eqref{eq::1} (or \eqref{eq::2}, if $\kappa=1$).
\end{proof}

\section{Examples}\label{esempi}
In this section we show how several concrete problems fit into the
general scheme
described above. Let us begin with equations without dissipative terms.

\begin{list}{\stepcounter{enumi}\emph{\arabic{enumi}.}\ }
{ \setlength{\labelwidth}{0cm} \setlength{\labelsep}{0cm}
\setlength{\leftmargin}{0cm} }

\item \emph{Linear equations.} These are obtained when $\gge$ is a
quadratic functional, e.g.
\[
\gge (v) = \frac12 \sum_{j\in {\cal R}} \int |\partial^j v |^2
\,dx,
\]
where ${\cal R}\subset {\mathbb N}^n$ is a finite set of
multiindices and $\partial^j$ denotes partial differentiation. In
this case the natural choice for the domain of  $\gge$ is
$W=\{v\in L^2\;|\; \partial^j v\in L^2,\; \forall j\in{\cal
R}\}$, and assumption (H1) is fulfilled (in particular,
\eqref{stimagattheta} is satisfied with $\theta=1/2$).

Reasoning as in Remark~\ref{rema1}, the hyperbolic equation corresponding to
\eqref{eq::1} is
\[
 w'' =- \sum_{j \in {\cal R}}(-1)^{|j|} \partial^{2j} w.
\]
In this case Theorem \ref{mainteo3} applies ($\kappa=0$), and
Problem~1 can be completely solved. Concrete instances are the
linear wave equation $w''=\Delta w$, the Klein--Gordon equation
$w''=\Delta w-w$, or the bi--harmonic wave equation $w''=-\Delta^2
w$.

\item \namedlabel{exNLW}{\arabic{enumi}} \emph{Defocusing NLW.}
This matches De~Giorgi's original conjecture in \cite{DG}, and has
been dealt with in \cite{Noi}. It corresponds to the choice
\[
\gge (v) =\int\left(\frac 1 2   |\nabla v |^2 + \frac1p
|v|^p\right) \,dx
\]
in \eqref{eq:deffeps}, for some $p>2$. Here, by Remark~\ref{rema1}, \eqref{eq::1} takes
the concrete form
\[
w'' = \Delta w -|w|^{p-2}w = 0.
\]
Letting $W= H^1 \cap L^p$, assumption (H1) is satisfied (with
$\theta=1-1/p$ in \eqref{stimagattheta}), and all the results in
\cite{Noi} are recovered as an application of  Theorem
\ref{mainteo3}.

\item\namedlabel{exLastND}{\arabic{enumi}}
\emph{Sine--Gordon equation.} If we let
\[
W(v)=\int\left( \frac 12 |\nabla v|^2+1-\cos v\right)\,dx
\]
with domain $W=H^1$, then \eqref{eq::1} becomes the sine--Gordon
equation
\[
w''=\Delta w-\sin w.
\]
Then (H1) is fulfilled with $\theta=1/2$, and Theorem~\ref{mainteo3} applies. Note that the functional $\gge$
associated with this problem is not convex.

\item \namedlabel{exQuasilin}{\arabic{enumi}} \emph{Quasilinear
wave equations.} Powers other than 2 on the gradient term in
$\gge$ give rise to quasilinear wave equations. For example
\[
\gge (v) =  \frac1p \int |\nabla v |^p \,dx,\quad\text{or}\quad
\gge (v) = \int\left( \frac1p  |\nabla v |^p + \frac1q
|v|^q\right) \,dx \qquad(p,q>1)
\]
correspond, respectively, to the quasilinear wave equations
\[
w'' = \Delta_p w  \qquad\text{and}\qquad  w'' = \Delta_p w -
|w|^{q-2}w,
\]
where $\Delta_p$ is the $p$--laplacian. Assumption (H1) is
satisfied, for the former equation, letting  $W= \{v\in L^2\;|\;
\nabla v\in L^p\}$ and $\theta=1-1/p$, while for the latter
one may set $W= \{v\in L^2\;|\; \nabla v\in L^p,\; v \in
L^q\}$ and $\theta=1-1/\max\{p,q\}$. In both cases
Theorem~\ref{mainteo1} applies, while Theorem~\ref{mainteo3}
\emph{cannot} be applied (unless $p=2$). It is an \emph{open
problem}, however, to establish if the last claim of
Theorem~\ref{mainteo3} (passage to the limit in the equation)
still applies when $p\not=2$. A positive answer would settle the
long--standing open question of \emph{global existence} of
weak solutions for this kind of equations (see \cite{Milani}).

\item \emph{Higher order nonlinear equations.} Just to give an
example (see for instance \cite{PT}), consider
\[
\gge(v) = \int \left(\frac 12 |\Delta v|^2 + \frac1p |\nabla v |^p
+ \frac1q |v|^q\right) \,dx \qquad(p,q>1).
\]
Then \eqref{eq::1} becomes the nonlinear vibrating--beam equation
\[
w'' =- \Delta^2 w + \Delta_p w  - |w|^{q-2}w,
\]
where $\Delta^2$ is the biharmonic operator. Here $W= \{v \in
H^2\;|\; \nabla v \in L^p, \; v \in L^q\;\}$, while
$\theta=1-1/\max\{2,p,q\}$. Here Theorem~\ref{mainteo3} applies, and provides global existence.

\item \emph{Kirchhoff equations.} The general scheme presented
in this paper allows one to treat
also \emph{nonlocal} problems. A typical example is the Kirchhoff
equation
\[
w'' = \left (\int |\nabla w|^2\, dx \right) \Delta w.
\]
Here one chooses
\[
\gge(v) = \frac 14\left( \int  |\nabla v|^2\,dx\right)^2,\quad
W=H^1
\]
(note that \eqref{stimagattheta} holds with $\theta=3/4$), and
Theorem~\ref{mainteo1} applies (while it is an \emph{open problem}
to see if the last claim of Theorem~\ref{mainteo3} is true in this
case). More generally, if $\gge(v) = \frac12\Phi \left( \int
|\nabla v|^2\,dx\right)$ for some appropriate function $\Phi$, one
formally obtains the equation
\[
w'' = \Phi'\left (\int |\nabla w|^2\, dx \right) \Delta w 
\]
(the appropriate constant $\theta$ in\eqref{stimagattheta}  will depend on
$\Phi$).

\item \namedlabel{exFL}{\arabic{enumi}}\emph{Wave equations with the fractional Laplacian.} Given
$s\in (0,1)$, we may consider the nonlocal energy
\[
\gge(v)=c\iint \frac{|v(x)-v(y)|^2}{|x-y|^{n+2s}}\,dxdy+\frac\lambda p
\int |v|^p\,dx\qquad (c>0,\,\,\lambda\geq 0,\,\, p>1),
\]
with domain $W=H^s\cap L^p$ (or simply $H^s$, if $\lambda=0$). It
is well known (see e.g.~\cite{HGuide}) that the first integral is
the natural energy associated with the \emph{fractional Laplacian}
$(-\Delta)^s$, so that (for a proper choice of $c$, see \cite{HGuide}) \eqref{eq::1} becomes
\[
w''=(-\Delta)^s w -\lambda |w|^{p-2}w,
\]
a (nonlinear, if $\lambda>0$)  wave equation with the fractional
Laplacian. One may check that assumption (H1) is satisfied, with
$\theta=1/2$ (or $\theta=1-1/\max\{2,p\}$ if $\lambda>0$) in
\eqref{stimagattheta}. Here one may apply Theorem \ref{mainteo3},
thus obtaining global existence.

\item[] The next examples concern \emph{dissipative} equations
with a structure as in \eqref{eq::2}: these are related to the
functional in \eqref{eq:deffepsdis}, as stated in Problem~2. We
will mainly focus on the choice of the functional $\diss$, thus
obtaining dissipative variants of the previous examples.

\item \emph{Telegraph type equations.} These are obtained letting
\[
\diss(v) = \frac12 \int |v|^2\, dx\qquad \text{(with domain
$H=L^2$)},
\]
thus fulfilling assumption (H2). Since
$\langle \nabla\diss(v),\cdot\rangle=\langle v,\cdot\rangle_{L^2}$,
by Remark~\ref{remtypicalH}
this
choice of $\diss$
 generates
the term $-w'$ in the right--hand--side of \eqref{eq::2}.

If, for instance,  $\gge$ is as in Example~\ref{exNLW}, then
we can obtain the nonlinear telegraph equation
\[
w'' =  \Delta w -|w|^{p-2}w -w'.
\]
In this case one can solve Problem~2 completely, since
Theorem \ref{mainteo3} can now be applied with $\kappa=1$.

If, on the other hand, $\gge$ is as in
Example~\ref{exQuasilin}, then one obtains
\[
w'' = \Delta_p w -|w|^{q-2}w -w'
\]
and so on. In fact, in practice, the term $-w'$ can be inserted in
any of the above examples (Theorem~\ref{mainteo2} can always be
applied, while Theorem~\ref{mainteo3} should now be applied with
$\kappa=1$, when possible).

\item \emph{Strongly damped wave equations.} The term ``strongly
damped'' usually denotes the presence of $\Delta w'$ in the
equation (se e.g. \cite{KZ}). We can treat this case by letting
\[
\diss(v) = \frac12 \int |\nabla v|^2\, dx\qquad\text{(with domain
$H=H^1$)}
\]
so that $\nabla\diss(v)$ corresponds to $-\Delta v$ by Remark~\ref{remtypicalH}.
 Then, building on
Example~\ref{exNLW}, we may consider
\[
w'' = \Delta w - |w|^{p-2}w +\Delta w'
\]
(for which Theorem \ref{mainteo3} applies with $\kappa=1$), or
quasilinear versions such as
\[
w'' = \Delta_p w -|w|^{q-2}w +\Delta w'.
\]
The last equation does \emph{not} satisfy the assumptions of
Theorem~\ref{mainteo3} (unless $p=2$). In a forthcoming paper, however, we will
show that the \emph{claim} of Theorem~\ref{mainteo3} is \emph{in
fact true}, for every $p,q>1$.

\item \emph{Other damped equations.} In each of
Examples 1--\ref{exLastND} one can add several dissipative terms. For example,
by Remark~\ref{remtypicalH}, the choice
\[
\diss(v) = \frac12 \int \left(|\Delta v|^2 + |\nabla v|^2 +
|v|^2\right)\, dx
\]
would introduce, in any given equation, the term $-\Delta^2 w' +
\Delta w' - w'$.
\end{list}

\end{document}